\documentclass[11pt]{article}
\usepackage{amsmath,amsfonts,amsthm,hyperref}
\usepackage[left=1in,top=1in,right=1in]{geometry}
\usepackage[hyphenbreaks]{breakurl}

\theoremstyle{definition}
\newtheorem{theorem}{Theorem}[section]
\newtheorem{lemma}[theorem]{Lemma}
\newtheorem{claim}[theorem]{Claim}
\newtheorem{problem}[theorem]{Problem}
\counterwithin{equation}{section}
\allowdisplaybreaks

\begin{document}

\title{On higher dimensional point sets in general position}
\author{Andrew Suk\thanks{Department of Mathematics, University of California at San Diego, La Jolla, CA, 92093 USA. Supported by NSF CAREER award DMS-1800746 and NSF award DMS-1952786. Email: {\tt asuk@ucsd.edu}.}\and Ji Zeng\thanks{Department of Mathematics, University of California at San Diego, La Jolla, CA, 92093 USA. Supported by NSF grant DMS-1800746. Email:{\tt jzeng@ucsd.edu}.}}
\date{}

\maketitle

\begin{abstract}
A finite point set in $\mathbb{R}^d$ is in general position if no $d + 1$ points lie on a common hyperplane. Let $\alpha_d(N)$ be the largest integer such that any set of $N$ points in $\mathbb{R}^d$, with no $d + 2$ members on a common hyperplane, contains a subset of size $\alpha_d(N)$ in general position. Using the method of hypergraph containers, Balogh and Solymosi showed that $\alpha_2(N) < N^{5/6 + o(1)}$.  In this paper, we also use the container method to obtain new upper bounds for $\alpha_d(N)$ when $d \geq 3$. More precisely, we show that if $d$ is odd, then $\alpha_d(N) < N^{\frac{1}{2} + \frac{1}{2d} + o(1)}$, and if $d$ is even, we have $\alpha_d(N) < N^{\frac{1}{2} + \frac{1}{d-1} + o(1)}$. We also study the classical problem of determining $a(d,k,n)$, the maximum number of points selected from the grid $[n]^d$ such that no $k + 2$ members lie on a $k$-flat, and improve the previously best known bound for $a(d,k,n)$, due to Lefmann in 2008, by a polynomial factor when $k$ = 2 or 3 (mod 4).
\end{abstract}

\section{Introduction}
A finite point set in $\mathbb{R}^d$ is said to be in \emph{general position} if no $d + 1$ members lie on a common hyperplane.  Let $\alpha_d(N)$ be the largest integer such that any set of $N$ points in $\mathbb{R}^d$, with no $d + 2$ members on a hyperplane, contains $\alpha_d(N)$ points in general position.  

In 1986, Erd\H os \cite{erdos} proposed the problem of determining $\alpha_2(N)$ and observed that a simple greedy algorithm shows $\alpha_2(N) \geq \Omega(\sqrt{N})$.  A few years later, F\"uredi \cite{F} showed that\begin{equation*}
    \Omega(\sqrt{N\log N}) < \alpha_2(N) < o(N),
\end{equation*} where the lower bound uses a result of Phelps and R\"odl \cite{PR} on partial Steiner systems, and the upper bound relies on the density Hales-Jewett theorem \cite{F1,F2}. In 2018, a breakthrough was made by Balogh and Solymosi \cite{BS}, who showed that $\alpha_2(N) < N^{5/6+o(1)}$.  Their proof was based on the method of hypergraph containers, a powerful technique introduced independently by Balogh, Morris, and Samotij \cite{BMS} and by Saxton and Thomason \cite{ST}, that reveals an underlying structure of the independent sets in a hypergraph. We refer interested readers to \cite{BMSs} for a survey of results based on this method.

In higher dimensions, the best lower bound for $\alpha_d(N)$ is due to Cardinal, T\'oth, and Wood \cite{CTW}, who showed that $\alpha_d(N) \geq \Omega((N\log N)^{1/d})$, for every fixed $d\geq 2$. For upper bounds, Mili{\'c}evi{\'c} \cite{M} used the density Hales-Jewett theorem to show that $\alpha_d(N) = o(N)$ for every fixed $d\geq 2$. However, these upper bounds in \cite{M}, just like those in \cite{F}, are still almost linear in $N$. Our main result is the following.
\begin{theorem}\label{main}
Let $d\geq 3$ be a fixed integer. If $d$ is odd, then $\alpha_d(N) < N^{\frac{1}{2} + \frac{1}{2d} + o(1)}$. If $d$ is even, then $\alpha_d(N) < N^{\frac{1}{2} + \frac{1}{d-1} + o(1)}.$
\end{theorem}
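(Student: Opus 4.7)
The proof will follow the strategy pioneered by Balogh and Solymosi for $d = 2$: apply the hypergraph container method to the $(d+1)$-uniform hypergraph $H$ on a well-chosen host whose edges are the $(d+1)$-tuples of points lying on a common hyperplane. A general-position subset of the host is exactly an independent set of $H$, so bounding $\alpha(H)$ suffices. For the host $X \subset \mathbb{R}^d$ I would take a random subset of $[n]^d$ pruned by deleting one point from each $(d+2)$-coplanar tuple; with the right parameters this gives $|X| = N$, no $d+2$ coplanar, and a large value of $e(H)$.

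The two main inputs to the container theorem are $e(H)$ and the codegree sequence $\Delta_\ell(H)$. Both reduce to the same incidence-geometric question: given $n$ points in $\mathbb{R}^d$ with no $d+2$ coplanar, upper-bound the number of hyperplanes through any fixed $\ell$-flat that contain exactly $d+1$ of the points. I would prove this by induction on $d$: projecting from a chosen flat (a point to reduce to $\mathbb{R}^{d-1}$, a line to reduce to $\mathbb{R}^{d-2}$) recasts the question one or two dimensions down, and the planar base case is the standard bound on $3$-rich lines in a set with no $4$ collinear, provable via Szemer\'edi-Trotter. Choosing the projection target optimally at each step is what produces the parity-dependent exponent $\beta_d$ appearing in the final bound.

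With these codegree estimates in hand, feeding them into the container theorem of Balogh-Morris-Samotij and Saxton-Thomason yields a family $\mathcal{C}$ of $\exp(o(N))$ containers covering every general-position subset of $X$, each $C \in \mathcal{C}$ with $e(H[C]) \le \varepsilon\, e(H)$. A supersaturation step, essentially the same incidence bound applied inside $C$, then forces $|C| \le N^{1/2 + 1/(2d) + o(1)}$ for odd $d$ and $|C| \le N^{1/2 + 1/(d-1) + o(1)}$ for even $d$. The main obstacle is the incidence bound, particularly the parity-sensitive improvement: the container calculus is essentially a black box once codegrees are in place, but extracting the factor of $2$ that separates the odd and even exponents requires delicate bookkeeping in the projection recursion, for instance through pencil or two-flat arguments that only gain every other dimension.
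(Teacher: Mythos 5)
There is a genuine gap. Your plan stays entirely in the target dimension $d$, but the paper's construction does not: it first works in a much larger auxiliary dimension $D = D(\alpha,k,r)$, builds the host grid and hypergraph inside $[n]^D$, and only at the very end projects the resulting point set generically down to $\mathbb{R}^d$. This detour is essential, not cosmetic. The supersaturation lower bound on $|E(\mathcal{H})|$ that feeds the container machinery (Lemma~\ref{supersaturation}) requires $D - 2\gamma > (k-1)(k+2)$, and the final exponent calculation repeatedly uses that $D$ is large to absorb lower-order terms; a construction confined to $[n]^d$ with $d$ fixed would not yield containers small enough to produce the stated bound.

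The key technical inputs are also not obtained by the incidence-geometric route you sketch, and I do not see how they could be. The supersaturation lower bound comes from an additive argument: with $r = k/2 + 1$, two $r$-subsets of $V$ with equal coordinate sums yield $2r = k+2$ points on a $k$-flat, and Jensen's inequality applied to the fibers of the summing map $\binom{V}{r} \to [rn]^D$ gives the count, after a separate pigeonhole argument shows that degenerate configurations are rare. The codegree bound (Lemma~\ref{maxdegree}) is a direct count of ways to extend an $\ell$-tuple to a non-degenerate $(k+2)$-tuple on a $k$-flat; no Szemer\'edi--Trotter is used anywhere in the paper. The parity split between odd and even $d$ does not come from ``delicate bookkeeping in a projection recursion,'' but from the plain fact that the sum trick forces $k$ to be even so that $r = k/2 + 1$ is an integer. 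When $d$ is odd one takes $k = d-1$ (so the edges are $(d+1)$-tuples on hyperplanes, matching your hypergraph), but when $d$ is even one must drop to $k = d-2$ and work with a $d$-uniform hypergraph whose edges are $d$-tuples on $(d-2)$-flats; an extra point is added afterwards to produce a degenerate hyperplane. That drop is exactly what costs the weaker exponent $\frac{1}{2}+\frac{1}{d-1}$.

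Two further issues: you propose to bound from above the number of rich hyperplanes through a fixed flat, but supersaturation needs a \emph{lower} bound on the number of edges in an arbitrary large subset, so your two ``main inputs'' are not really the same question and the sketch conflates them. And the paper does not apply the container lemma once with $\epsilon = o(1)$; it applies it iteratively $O(1/\alpha)$ times, shrinking the edge count by a polynomial factor $c_1 n^{-\alpha}$ per round until every container is smaller than $n^{\frac{k}{k+1}D + k}$. A single application with your stated parameters would not bring the containers down to the required size.
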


\noindent  Our proof of Theorem \ref{main} is also based on the hypergraph container method. A key ingredient in the proof is a new supersaturation lemma for $(k + 2)$-tuples of the grid $[n]^d$ that lie on a $k$-flat, which we shall discuss in the next section. Here, by a \emph{$k$-flat} we mean a $k$-dimensional affine subspace of $\mathbb{R}^d$.

One can consider a generalization of the quantity $\alpha_d(N)$. We let $\alpha_{d,s}(N)$ be the largest integer such that any set of $N$ points in $\mathbb{R}^d$, with no $d + s$ members on a hyperplane, contains $\alpha_{d,s}(N)$ points in general position. Hence, $\alpha_d(N) = \alpha_{d,2}(N)$.  A simple argument of Erd\H os \cite{erdos} shows that $\alpha_{d,s}(N) \geq \Omega(N^{1/d})$ for fixed $d$ and $s$ (see Section~\ref{sec_remarks}, or \cite{CTW} for large $s$). 
 In the other direction, following the arguments in our proof of Theorem~\ref{main} with a slight modification, we show the following.
\begin{theorem}\label{thm_remark}
Let $d,s\geq 3$ be fixed integers. If $d$ is odd and $ds + 2 > 2d + 2s$, then $\alpha_{d,s}(N)\leq N^{\frac{1}{2}+o(1)}$. If $d$ is even and $ds + 2 > 2d + 3s$, then $\alpha_{d,s}(N)\leq N^{\frac{1}{2}+o(1)}$.
\end{theorem}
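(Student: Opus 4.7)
The plan is to follow the hypergraph container proof of Theorem \ref{main} essentially line-by-line, modifying only the steps that depend on how many points of $P$ are allowed on a single hyperplane. As before, I would form the $(d+1)$-uniform hypergraph $H$ on the $N$-point set $P\subseteq\mathbb{R}^d$ whose edges are the $(d+1)$-subsets of $P$ lying on a common hyperplane, so that an independent set of $H$ is exactly a subset of $P$ in general position. After scaling $P$ into a grid $[n]^d$ with $n$ polynomial in $N$, the supersaturation lemma used in the proof of Theorem \ref{main} still applies and lower-bounds $|E(H)|$ by the same expression whose exponents involve $\frac{d-1}{d}$ (odd $d$) or $\frac{d-2}{d-1}$ (even $d$).

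The one genuine change is the multiplicity bound per hyperplane. Under the hypothesis that no $d+s$ points of $P$ lie on a hyperplane, each hyperplane now carries up to $\binom{d+s-1}{d+1}$ edges of $H$, whereas in the case $s=2$ it carries at most one. Because $d$ and $s$ are fixed, this is only a constant-factor change in the total edge count, but it also affects the codegree sequence fed into the container lemma: pairs of edges sharing up to $d+s-2$ vertices must now be tracked, and both the maximum degree and the $r$-codegree estimates from the proof of Theorem \ref{main} need to absorb these factors. I would thread the revised multiplicity and codegree bounds through the container calculation verbatim, extracting an upper bound of the form $\alpha_{d,s}(N)\leq N^{\gamma(d,s)+o(1)}$ for an explicit exponent $\gamma(d,s)$.

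The main obstacle, and the reason for the arithmetic side conditions in the statement, is to verify that $\gamma(d,s)\leq 1/2$. A direct computation should reduce this inequality to $\frac{2d+s-2}{2d+2s-2}<\frac{d-1}{d}$ when $d$ is odd and $\frac{2d+s-2}{2d+2s-2}<\frac{d-2}{d-1}$ when $d$ is even. The left-hand fraction encodes the balance between the edge count and the per-hyperplane multiplicity $\binom{d+s-1}{d+1}$ as they enter the container bound, while the right-hand side is the parity-dependent exponent coming from the supersaturation lemma for $(d+1)$-tuples on hyperplanes. Beyond carefully propagating the $s$-dependent constants through the container calculation and checking this single numerical compatibility, no new ideas are required relative to the proof of Theorem \ref{main}.
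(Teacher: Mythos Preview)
Your plan misidentifies where the parameter $s$ enters the argument, and as a result the whole architecture is off. An upper bound on $\alpha_{d,s}(N)$ is proved by \emph{constructing} a bad point set, not by analyzing a given $P$; there is no step in which an arbitrary $N$-point set is ``scaled into a grid $[n]^d$''. In the paper's proof (Theorem~\ref{mainapp}), one works in a large auxiliary dimension $d$, defines the $(k+2)$-uniform hypergraph $\mathcal{H}$ on the full grid $[n]^d$ whose edges are non-degenerate $(k+2)$-tuples on a $k$-flat, and runs the container iteration on $\mathcal{H}$. This container step is \emph{identical} to the one in Theorem~\ref{maink}: the supersaturation bound (Lemma~\ref{supersaturation}) and the codegree bounds (Lemma~\ref{maxdegree}) concern the grid and have nothing to do with $s$. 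In particular, there is no ``$\binom{d+s-1}{d+1}$ edges per hyperplane'' issue and no $s$-dependent codegree sequence to thread through the container lemma.

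The parameter $s$ enters only at the random sampling/deletion stage. After obtaining the container family, one samples $[n]^d$ with probability $p$ and deletes one point from every $(r+s)$-tuple lying on an $r$-flat (rather than every $(r+3)$-tuple as in Theorem~\ref{maink}). The number of such tuples is $O(n^{(r+1)d+(s-1)r})$, which forces $p\asymp n^{-\frac{r}{r+s-1}(d+s-1)}$ and hence $N=\Omega\bigl(n^{(s-1)(d-r)/(r+s-1)}\bigr)$. The arithmetic side condition in the theorem statement is exactly what makes the exponent in $\mathbb{E}[X]\le |\mathcal{C}|\binom{n^{xd+k}}{m}p^m$ negative for some choice of the container threshold $x\le k/(k+1)$; it does not arise from any codegree-versus-supersaturation balance. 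So the genuine modification relative to Theorem~\ref{main} is a single change of deletion exponent in the probabilistic step, together with reoptimizing the container cutoff $x$, and your proposal would need to be rewritten around this.
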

\noindent For example, when we fix $d=3$ and $s\geq 5$, we have $\alpha_{d,s}(N)\leq N^{\frac{1}{2}+o(1)}$.

\medskip

We also study the classical problem of determining the maximum number of points selected from the grid $[n]^d$ such that no $k + 2$ members lie on a $k$-flat. The key ingredient of Theorem~\ref{main} mentioned above can be seen as a supersaturation version of this Tur{\'a}n-type problem. When $k=1$, this is the famous \emph{no-three-in-line problem} raised by Dudeney \cite{Du} in 1917: Is it true that one can select $2n$ points in $[n]^2$ such that no three are collinear? Clearly, $2n$ is an upper bound as any vertical line must contain at most 2 points. For small values of $n$, many authors have published solutions to this problem obtaining the bound of $2n$ (e.g. see \cite{Fl}), but for large $n$, the best known general construction is due to Hall--Jackson--Sudbery--Wild~\cite{Ha} with slightly fewer than $3n/2$ points.

More generally, we let $a(d,k,r,n)$ denote the maximum number of points from $[n]^d$ such that no $r$ points lie on a $k$-flat. Since $[n]^d$ can be covered by $n^{d-k}$ many $k$-flats, we have the trivial upper bound $a(d,k,r,n) \leq (r-1)n^{d-k}$. For certain values $d$, $k$, and $r$ fixed and $n$ tends to infinity, this bound is known to be asymptotically best possible: Many authors \cite{Roth,BK,L} noticed that $a(d,d-1,d+1,n) = \Theta(n)$ by looking at the modular moment curve over a finite field $\mathbb{Z}_p$; In \cite{PW}, P\'or and Wood proved that $a(3,1,3,n)=\Theta(n^2)$; Dvir and Lovett~\cite{dvir2012subspace} showed that $a(d,k,r,n) = \Theta(n^{d-k})$ when $r > d^k$ (see also \cite{TS}).

We shall focus on the case when $r = k + 2$ and write $a(d,k,n):=a(d,k,k+2,n)$. Surprisingly, Lefmann \cite{L} (see also \cite{L08}) showed that $a(d,k,n)$ behaves much differently than $\Theta(n^{d-k})$. In particular, he showed that\begin{equation*}
    a(d,k,n) \leq O\left(n^{\frac{d}{\lfloor (k + 2)/2\rfloor}}\right).
\end{equation*} Our next result improves this upper bound when $k+2$ is congruent to 0 or 1 mod 4.
\begin{theorem}\label{main2}
For fixed $d$ and $k$, as $n\to\infty$, we have\begin{equation*}
    a(d,k,n)\leq O\left(n^{\frac{d}{2\lfloor (k+2)/4\rfloor}(1-\frac{1}{2\lfloor(k+2)/4\rfloor d+1})}\right).
\end{equation*}
\end{theorem}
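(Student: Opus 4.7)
The plan is to combine a direct counting argument for low-dimensional flats spanned by $P$ with a Kővári-Sós-Turán-type incidence bound on the grid $[n]^d$. Set $m = \lfloor (k+2)/4 \rfloor$ and $t = 2m$. The choice of $t$ is dictated by two properties: $t - 1 \leq k$, so that every $(t-1)$-flat contains at most $k+1$ points of $P$; and $2t \geq k + 1$, which holds precisely when $k + 2 \equiv 0$ or $1 \pmod{4}$ and is what restricts the theorem to these residue classes.

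First I would double-count affinely-independent $t$-subsets of $P$. Each spans a unique $(t-1)$-flat, each such flat hosts at most $\binom{k+1}{t}$ such subsets, and the affinely-dependent $t$-subsets contribute only a lower-order $O(N^{t-1})$ term. So, writing $\mathcal{F}$ for the family of $(t-1)$-flats spanned by $P$, we obtain $|\mathcal{F}| = \Omega(N^t)$. A short auxiliary argument then shows that only $O_{k,m}(1)$ flats of $\mathcal{F}$ can be parallel translates of a given $(t-1)$-dimensional linear subspace of $\mathbb{R}^d$: otherwise the union of several such parallel flats would contain more than $k+1$ points of $P$ in a common flat of affine dimension at most $k$, using $2t \geq k+1$ and $t \leq k$. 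This reduces $|\mathcal{F}|$ to a count of realized $(t-1)$-dimensional subspace directions in $[-n,n]^d$.

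The target is an upper bound of the form $|\mathcal{F}| \leq O(n^{d^2 t/(td+1)})$, which combined with $|\mathcal{F}| = \Omega(N^t)$ yields $N \leq O(n^{d^2/(td+1)}) = O(n^{(d/2m)(1 - 1/(2md+1))})$, matching the theorem. To obtain this bound I would set up a Kővári-Sós-Turán / Zarankiewicz-style bipartite incidence problem between $\mathcal{F}$ (viewed as direction-plus-offset data) and the grid $[n]^d$, exploiting that any $t$ affinely-independent lattice points determine their $(t-1)$-flat uniquely and that each flat of $\mathcal{F}$ contains at least $t$ lattice points of $[n]^d$. The main obstacle will be executing this incidence count tightly: a naive Kővári-Sós-Turán application to the flats-versus-grid bipartite graph recovers only Lefmann's bound $|\mathcal{F}| \leq O(n^d)$ and hence $N \leq O(n^{d/t})$. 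The extra saving of $n^{d/(td+1)}$ should come from a carefully chosen auxiliary double-count—plausibly counting ordered $t$-tuples of lattice points lying on a common flat and balancing via Hölder's inequality with an exponent tied to $td+1$—analogous to the Zarankiewicz bound for $K_{s,2}$-free bipartite graphs, with the $d$ coordinate directions of the grid playing the role of the $d$-fold product structure. This last step is where I expect essentially all the difficulty to concentrate.
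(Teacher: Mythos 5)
Your high-level exponent arithmetic is correct (with $t=2m$ the target is $N \leq O(n^{d^2/(td+1)})$, matching the theorem), and the preliminary steps are sound: double-counting affinely independent $t$-subsets gives $|\mathcal{F}|=\Omega(N^t)$, and the parallel-flat argument (using $2t\geq k+1$ and $t\leq k$) does bound the number of flats per direction by $O_{k}(1)$, together with the monotonicity $a(d,k,n)\leq a(d,k-1,n)$ to cover all residue classes. However, the proposal has a genuine gap exactly where you say the difficulty concentrates: you give no argument for $|\mathcal{F}| \leq O\bigl(n^{d^2 t/(td+1)}\bigr)$, i.e.\ for the $n^{d/(td+1)}$ saving over the trivial direction count / Lefmann bound. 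Invoking ``H\"older with exponent tied to $td+1$'' and a Zarankiewicz analogy does not constitute a proof, and it is not clear that a K\H{o}v\'ari--S\'os--Tur\'an-style incidence count on the flats-vs-grid bipartite graph can see the extra structure needed, since nothing in that setup distinguishes $[n]^d$ from an arbitrary $N$-point set.

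The paper's route is fundamentally different and worth contrasting. It does not count flats or directions. Instead it observes that a set with no $k+2$ points on a $k$-flat has only trivial solutions (in $V$) to equations of the form $c_1\bigl((\mathbf{x}_1+\cdots+\mathbf{x}_r)-(\mathbf{x}_{r+1}+\cdots+\mathbf{x}_{2r})\bigr)=c_2\bigl((\mathbf{x}_{2r+1}+\cdots+\mathbf{x}_{3r})-(\mathbf{x}_{3r+1}+\cdots+\mathbf{x}_{4r})\bigr)$ with small integer coefficients $c_1,c_2$, reducing Theorem~\ref{main2} to the ``multifold $B_g$-set'' bound of Theorem~\ref{multifold}. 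That bound is proved by an additive-combinatorics argument: one works with the $r$-fold sumset $S_r$ (in bijection with $\binom{V}{r}$), applies a Cauchy--Schwarz representation-function inequality (Lemma~\ref{CS}) of the form $|U|^2|T|^2/|U+T|\leq \sum_x|\Phi_{U-U}(x)||\Phi_{T-T}(x)|$, and, crucially, employs a scaling trick after Cilleruelo--Timmons and Jia: partition $S_r$ into residue classes modulo $j$ for all $j\in[m]$ with $m\approx n^{d/(2rd+1)}$, take $T$ to be a cube $\mathbb{Z}_\ell^d$, and optimize $m,\ell$. The saving $n^{d/(td+1)}$ comes precisely from this residue decomposition over $m\approx n^{d/(2rd+1)}$ moduli, an arithmetic device with no analogue in your geometric direction-counting framework. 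To complete your argument you would need to either recreate this scaling idea in the flats/directions language or supply a genuinely new mechanism; as written, the proof is incomplete.
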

\noindent For example, we have $a(4,2,n)\leq O(n^{\frac{16}{9}})$ while Lefmann's bound in \cite{L} gives us $a(4,2,n)\leq O(n^{2})$, which coincides with the trivial upper bound. In particular, Theorem~\ref{main2} tells us that, if $4$ divides $k+2$, then $a(d,k,n)$ only behaves like $\Theta(n^{d-k})$ when $d=k+1$. This is quite interesting compared to the fact that $a(3,1,n)=\Theta(n^2)$ proved in \cite{PW}. Lastly, let us note that the current best lower bound for $a(d,k,n)$ is also due to Lefmann \cite{L}, who showed that $a(d,k,n) \geq \Omega\left(n^{\frac{d}{k + 1} - k - \frac{k}{k + 1}}\right)$.
 
For integer $n > 0$, we let $[n] = \{1,\dots, n\}$, and $\mathbb{Z}_n = \{0,1,\dots, n-1\}$. We systemically omit floors and ceilings whenever they are not crucial for the sake of clarity in our presentation. All exponentials and logarithms are in base two.

\section{Supersaturation of non-degenerate coplanar tuples}

In this section, we establish some lemmas for the proofs of Theorems~\ref{main} and~\ref{thm_remark}.

Given a set $T$ of $k + 2$ points in $\mathbb{R}^d$ that lie on a $k$-flat, we say that $T$ is \emph{degenerate} if there is a subset $S\subset T$ of size $j$, where $3 \leq j \leq k + 1$, such that $S$ lies on a $(j-2)$-flat. Otherwise, we say that $T$ is \emph{non-degenerate}. We establish a supersaturation lemma for non-degenerate $(k + 2)$-tuples of $[n]^d$.
\begin{lemma}\label{supersaturation}
For real number $\delta > 0$ and fixed positive integers $d,k$, such that $k$ is even and $d - 2\delta > (k -1)(k + 2)$, any subset $V\subset [n]^d$ of size $n^{d-\delta}$ spans at least $\Omega(n^{(k + 1)d - (k + 2)\delta})$ non-degenerate $(k+2)$-tuples that lie on a $k$-flat.
\end{lemma}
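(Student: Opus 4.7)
The plan is to count ordered $(k+2)$-tuples $(a_1,\dots,a_{k+2})\in V^{k+2}$ satisfying the alternating affine dependence
\[
    a_1 - a_2 + a_3 - a_4 + \cdots + a_{k+1} - a_{k+2} = 0,
\]
and then to argue that most of these yield non-degenerate $(k+2)$-subsets lying on a common $k$-flat. Writing $k=2m$, the parity hypothesis enters precisely here: the $k+2$ coefficients $(+1,-1,\dots,+1,-1)$ have sum $0$, so any tuple satisfying the relation with distinct coordinates is an affine dependence on $k+2$ points and hence spans a flat of dimension at most $k$.

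First I would lower-bound the number $T$ of such tuples by Cauchy--Schwarz. Rewriting the condition as $a_1+a_3+\cdots+a_{k+1}=a_2+a_4+\cdots+a_{k+2}$ gives $T=\sum_{s} t(s)^2$, where $t(s)=\#\{(x_1,\dots,x_{m+1})\in V^{m+1}:\sum_i x_i=s\}$. Since $\sum_s t(s)=|V|^{m+1}$ and $t$ is supported on the sumset $(m+1)V\subset[-(m+1)n,(m+1)n]^d$ of size $O(n^d)$, Cauchy--Schwarz gives
\[
    T \;\geq\; \frac{|V|^{2(m+1)}}{O(n^d)} \;=\; \Omega\!\left(n^{(k+1)d-(k+2)\gamma}\right).
\]

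Next I would show that the number of \emph{bad} tuples --- those with a repeated coordinate, or with some $j$-subset ($3\leq j\leq k+1$) on a $(j-2)$-flat --- is $o(T)$. For tuples with $a_i=a_{i'}$, the repeated pair either cancels or combines, reducing the sum condition to an analogous linear relation on $k$ remaining variables; fixing the repeated value and all but one remaining coordinate determines the last, yielding $O(|V|^k)=O(n^{kd-k\gamma})$ tuples in total, which is $o(T)$ since the hypothesis implies $\gamma<d/2$. For the $j$-subset degeneracy, I would fix $j$ of the $k+2$ positions and invoke the estimate that the number of ordered $j$-tuples of $[n]^d$ on a common $(j-2)$-flat is $O(n^{d+(j-1)(j-2)})$; this count is dominated by the $\binom{d}{j-2}n^{d-(j-2)}$ axis-aligned $(j-2)$-flats, each of which contains $n^{j-2}$ grid points. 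Coupling each bad $j$-tuple to at most $|V|^{k+1-j}$ completions of the remaining positions compatible with the sum constraint gives a total of $O\!\left(n^{(k+2-j)d+(j-1)(j-2)-(k+1-j)\gamma}\right)$. A direct calculation shows this is $o(T)$ precisely when $(j+1)\gamma<(j-1)(d-j+2)$, and one checks that the hypothesis $d-2\gamma>(k-1)(k+2)$ is exactly what is needed to secure this for every $j\in[3,k+1]$.

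The main obstacle is the axis-aligned dominance estimate for $j$-tuples on a $(j-2)$-flat in $[n]^d$, since one must verify that tilted $(j-2)$-flats do not collectively contribute more than the axis-aligned ones. Once this is in hand, combining the lower bound on $T$ with the upper bounds on bad tuples --- and noting that each unordered non-degenerate $(k+2)$-subset is counted at most an $O_k(1)$ number of times as an alternating-sum tuple --- yields $\Omega(n^{(k+1)d-(k+2)\gamma})$ non-degenerate $(k+2)$-subsets on a $k$-flat, as claimed.
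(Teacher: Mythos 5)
Your high-level plan --- lower-bound the number of alternating-sum $(k+2)$-tuples via Cauchy--Schwarz, then show bad tuples are negligible --- is the same as the paper's. Splitting the sum into two $(\tfrac{k}{2}+1)$-fold sums and applying Cauchy--Schwarz to $\sum_s t(s)^2$ is exactly the paper's Jensen step on pairs of $r$-tuples with equal sum, and your lower bound $T=\Omega(n^{(k+1)d-(k+2)\gamma})$ matches. Your treatment of the bad tuples, however, takes a different route than Claims~\ref{first} and~\ref{second}: the paper first shows every bad pair has union on a $(k-1)$-flat, then pigeonholes over the at most $k|V|^k$ flats of dimension $\leq k-1$ spanned by $V$, each contributing $O(n^{(k-1)(k+2)})$ bad pairs; you instead bound, for each $j$, the number of ordered tuples containing a degenerate $j$-sub-tuple.

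The gap you flag is real: the estimate that the number of ordered $j$-tuples of $[n]^d$ on a common $(j-2)$-flat is $O(n^{d+(j-1)(j-2)})$ is not at all routine. Already for $j=3$, $d=2$, the number of ordered collinear triples in $[n]^2$ is $\Theta(n^4\log n)$, not $O(n^4)$, so axis-aligned flats are \emph{not} dominant; for general $j$ and $d$ no clean incidence bound of this type is available, and even proving an $n^{o(1)}$-slack version would require genuine work.

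Fortunately the estimate is unnecessary, and a $V$-restricted count --- much closer in spirit to the paper's Claim~\ref{second} --- closes the gap with no new input. If a $(k+2)$-tuple with distinct entries is degenerate, pick $j$ minimal and a $j$-sub-tuple $S$ lying on a $(j-2)$-flat; minimality forces $S$ to span exactly a $(j-2)$-flat, and any $j-1$ of its points are affinely independent and span that flat. So choose the positions of $S$ ($O_k(1)$ ways), its spanning $j-1$ points from $V$ (at most $|V|^{j-1}$ ways), the $j$-th point of $S$ among the at most $n^{j-2}$ grid points of that flat, and then $k+1-j$ of the remaining positions freely from $V$, the last being forced by the alternating-sum relation. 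This gives at most
\begin{equation*}
    O\bigl(|V|^{k}\,n^{\,j-2}\bigr) = O\bigl(n^{k(d-\gamma)+j-2}\bigr)
\end{equation*}
bad ordered tuples for that $j$, and this is $o(T)$ iff $j-2 < d-2\gamma$. Since $j\leq k+1$, this follows from the hypothesis $d-2\gamma>(k-1)(k+2)\geq k-1$. With this substitution (and your repeated-coordinate estimate, which is fine as written) the proposal becomes a correct, only mildly different, account of the paper's proof.
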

\begin{proof}
Let $V\subset [n]^d$ such that $|V| = n^{d - \delta}$. Set $r  = \frac{k}{2} + 1$ and $E_r  = \binom{V}{r}$ to be the collection of $r$-tuples of $V$. Notice that the sum of an $r$-tuple from $V$ belongs to $[rn]^d$. For each $v \in [rn]^d$, we define\begin{equation*}
    E_r(v)=\{\{v_1,\dots,v_r\}\in E_r: v_1+\dots+v_r=v\}.
\end{equation*} Then for $T_1,T_2 \in E_r(v)$, where $T_1 = \{v_1,\dots, v_{r}\}$ and $T_2 = \{u_1,\dots, u_{r}\}$, we have\begin{equation*}
    v_1 + \dots +  v_{r} = v = u_1 + \dots +  u_{r},
\end{equation*} which implies that $T_1\cup T_2$ lies on a common $k$-flat. Let\begin{equation*}
    E_{2r} = \bigcup_{v \in [rn]^d}\ \bigcup_{T_1,T_2 \in E_r(v)}  \{T_1, T_2\}.
\end{equation*} Hence, for each $\{T_1, T_2\} \in E_{2r}$, $T_1\cup T_2$ lies on a $k$-flat. Moreover, by Jensen's inequality, we have \begin{equation*}
    |E_{2r}| = \sum_{v \in [rn]^d} \binom{|E_r(v)|}{2} \geq (rn)^d \binom{ \frac{\sum_{v } |E_r(v)| }{ (rn)^d}}{2} = (rn)^d \binom{ |E_r|/ (rn)^d}{2} \geq \frac{|E_r|^2}{4(rn)^d}.
\end{equation*} Since $k$ and $d$ are fixed and $r = \frac{k}{2} + 1$ and $|V|= n^{d - \delta}$,\begin{equation*}
    |E_r|^2  = \binom{|V|}{r}^2 = \binom{|V|}{(k/2) + 1}^2 \geq \Omega(n^{(k + 2)(d-\delta)}).
\end{equation*} Combining the two inequalities above gives \begin{equation*}
    |E_{2r}| \geq \Omega(n^{(k + 1)d - (k + 2)\delta}).
\end{equation*}

We say that  $\{T_1, T_2\} \in E_{2r}$ is \emph{good} if $T_1\cap T_2 = \emptyset$, and the $(k + 2)$-tuple $(T_1\cup T_2)$ is non-degenerate.  Otherwise, we say that $\{T_1,T_2\}$ is \emph{bad}. In what follows, we will show that at least half of the pairs (i.e. elements) in $E_{2r}$ are good. To this end, we will need the following claim.
\begin{claim}\label{first}
If $\{T_1,T_2\}\in E_{2r}$ is bad, then $T_1\cup T_2$ lies on a $(k-1)$-flat.
\end{claim}
\begin{proof}[Proof of Claim]

Write $T_1 = \{v_1,\dots, v_{r}\}$ and $T_2 = \{u_1,\dots, u_{r}\}$. Let us consider the following cases.

\medskip  

\noindent \emph{Case 1.} Suppose $T_1\cap T_2 \neq \emptyset$. Then, without loss of generality, there is an integer $j < r$ such that\begin{equation*}
    v_1 + \dots + v_j = u_1 + \dots + u_j,
\end{equation*} where $v_1,\dots,v_j,u_1,\dots,u_j$ are all distinct elements, and $v_t = u_t$ for $t> j$. Thus $|T_1\cup T_2| = 2j  + (r-j)$. The $2j$ elements above lie on a $(2j - 2)$-flat. Adding the remaining $r-j$ points implies that $T_1\cup T_2$ lies on a $(j-2 + r)$-flat. Since $r = \frac{k}{2} + 1$ and $j \leq \frac{k}{2},$ $T_1\cup T_2$ lies on a $(k-1)$-flat.

\medskip

\noindent \emph{Case 2.} Suppose $T_1\cap T_2 = \emptyset$. Then $T_1\cup T_2$ must be degenerate, which means there is a subset $S\subset T_1\cup T_2$ of $j$ elements such that $S$ lies on a $(j-2)$-flat, for some $3 \leq j \leq k + 1$.  Without loss of generality, we can assume that $v_1 \not\in S$. Hence, $(T_1\cup T_2)\setminus \{v_1\}$ lies on a $(k-1)$-flat. On the other hand, we have\begin{equation*}
    v_1 = u_1+\dots+ u_{r} -v_2 -\dots -  v_{r}.
\end{equation*} Hence, $v_1$ is in the affine hull of $(T_1\cup T_2)\setminus \{v_1\}$ which implies that $T_1\cup T_2$ lies on a $(k-1)$-flat. 
\end{proof}

We are now ready to prove the following claim.
\begin{claim}\label{second}
At least half of the pairs in $E_{2r}$ are good.
\end{claim}
\begin{proof}[Proof of Claim]
For the sake of contradiction, suppose at least half of the pairs in $E_{2r}$ are bad. Let $H$ be the collection of all the $j$-flats spanned by subsets of $V$ for all $j\leq k-1$. Notice that if $S\subset V$ spans a $j$-flat $h$, then $h$ is also spanned by only $j+1$ elements from $S$. So we have\begin{equation*}
    |H| \leq \sum_{j=0}^{k-1}|V|^{j+1} \leq k n^{k(d - \delta)}.
\end{equation*}
For each bad pair $\{T_1, T_2\} \in E_{2r}$, $T_1\cup T_2$ lies on a $j$-flat from $H$ by Claim~\ref{first}.  By the pigeonhole principle, there is a $j$-flat $h$ with $j\leq k-1$ such that at least\begin{equation*}
    \frac{|E_{2r}|/2}{|H|} \geq \frac{\Omega(n^{(k + 1)d - (k + 2)\delta})}{2kn^{k(d - \delta)}}  = \Omega(n^{d - 2\delta })
\end{equation*} bad pairs from $E_{2r}$ have the property that their union lies in $h$.  On the other hand, since $h$ contains at most $n^{k-1}$ points from $[n]^d$, $h$ can correspond to at most $O(n^{(k-1)(k + 2)})$ bad pairs from $E_{2r}$. Since we assumed $d - 2\delta > (k-1)(k + 2)$, we have a contradiction for $n$ sufficiently large.
\end{proof}

Each good pair $\{T_1,T_2\}\in E_{2r}$ gives rise to a non-degenerate $(k + 2)$-tuple $T_1\cup T_2$ that lies on a $k$-flat. On the other hand, any such $(k + 2)$-tuple in $V$ will correspond to at most $\binom{k+2}{r}$ good pairs in $E_{2r}$. Hence, by Claim~\ref{second}, there are at least\begin{equation*}
    \left. \frac{|E_{2r}|}{2}\middle/\binom{k+2}{r}\right.=\Omega(n^{(k + 1)d - (k + 2)\delta})
\end{equation*} non-degenerate $(k + 2)$-tuples that lie on a $k$-flat, concluding the proof.
\end{proof}

In the other direction, we will use the following upper bounds.
\begin{lemma}\label{maxdegree}
For real number $\delta > 0$ and fixed positive integers $d,k,i$, such that $i<k+2$, suppose $U,V\subset [n]^d$ satisfy $|U|=i$ and $|V|=n^{d-\delta}$, then $V$ contains at most $n^{(k+1-i)(d-\delta)+k}$ non-degenerate $(k+2)$-tuples that lie on a $k$-flat and contain $U$.
\end{lemma}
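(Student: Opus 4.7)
The plan is to fix $U\subset V$ with $|U|=\ell$ and count extensions to non-degenerate $(k+2)$-tuples $T$ on a $k$-flat whose remaining $k+2-\ell$ points come from $V$.

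I would first reduce to the case where $U$ is itself affinely independent, so that it spans an $(\ell-1)$-flat $F_U$. If instead some $j$ points of $U$ with $3\leq j\leq\ell$ already lie on a $(j-2)$-flat, then every $T\supset U$ is degenerate and there is nothing to prove. Next, I would establish the following structural fact: for any non-degenerate $(k+2)$-tuple $T$ that spans a $k$-flat, every $k+1$ points of $T$ are affinely independent. Otherwise those $k+1$ points would lie on a $(k-1)$-flat, which is exactly the forbidden $j=k+1$ case of non-degeneracy. Writing the added points as $p_1,\dots,p_{k+2-\ell}$, it follows that \emph{any} $k-\ell+1$ of the $p_i$, together with $U$, already affinely span the full $k$-flat carrying $T$.

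With the structural fact in hand, the bound becomes a clean double-count. I decompose each ordered description of $T$ into an ordered spanning block of $k-\ell+1$ added points followed by a single residual point. The spanning block contributes at most $|V|^{k-\ell+1}=n^{(k-\ell+1)(d-\gamma)}$ choices and, together with $F_U$, determines at most a $k$-flat $F$ (orderings that fail to reach dimension $k$ do not contribute to valid $T$). The residual point is then constrained to $F\cap [n]^d$, so it has at most $n^k$ options, using the standard coordinate-projection estimate $|F\cap [n]^d|\leq n^k$ for any $k$-flat $F\subset\mathbb{R}^d$. Since each valid set of added points admits $(k+2-\ell)!$ such ordered descriptions, dividing by this constant yields the claimed upper bound $n^{(k+1-\ell)(d-\gamma)+k}$.

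I do not expect a genuine obstacle here — the argument is elementary bookkeeping once the right decomposition is chosen. The only point warranting care is the affine-independence claim for every $(k+1)$-subset of $T$, since this is precisely what legitimizes the "spanning block plus residual" decomposition and, in particular, lets me designate \emph{any} $k-\ell+1$ of the $p_i$ as the spanning block rather than having to track a specific affinely independent choice.
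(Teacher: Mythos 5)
Your argument is correct and follows essentially the same route as the paper: reduce to $U$ affinely independent, then count extensions by first choosing $k+1-\ell$ further points from $V$ to reach a spanning $(k+1)$-subset (at most $n^{(k+1-\ell)(d-\gamma)}$ ways), and finally the last point from the resulting $k$-flat (at most $n^k$ ways). The observation that non-degeneracy forces \emph{every} $(k+1)$-subset to be affinely independent — and the consequent division by $(k+2-\ell)!$ — is a correct refinement, but it is not needed; the paper uses only the existence of one such spanning $(k+1)$-subset, which already makes the map $T \mapsto (T',\text{residual point})$ injective.
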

\begin{proof}
If $U$ spans a $j$-flat for some $j<i-1$, then by definition no non-degenerate $(k+2)$-tuple contains $U$. Hence we can assume $U$ spans a $(i-1)$-flat. Observe that a non-degenerate $(k+2)$-tuple $T$, which lies on a $k$-flat and contains $U$, must contain a $(k+1)$-tuple $T'\subset T$ such that $T'$ spans a $k$-flat and $U\subset T'$. Then there are at most $n^{(k + 1 - i)(d-\delta)}$ ways to add $k + 1 - i$ points to $U$ from $V$ to obtain such $T'$. After $T'$ is determined, there are at most $n^k$ ways to add a final point from the affine hull of $T'$ to obtain $T$. So we conclude the proof by multiplication.
\end{proof}

\begin{lemma}\label{count}
For positive integers $\ell \leq d$, the grid $[n]^d$ contains at most $\ell\cdot n^{(\ell+1)d+(s-1)\ell}$ many $(\ell+s)$-tuples that lie on an $\ell$-flat.
\end{lemma}
\begin{proof}
We count the number of ways to choose an $(\ell+s)$-tuple $T$ that spans a $j$-flat. There are at most $n^{(j+1)d}$ ways to choose a subset $T'\subset T$ of size $j+1$ that spans the affine hull of $T$. After this $T'$ is determined, there are at most $n^{(\ell+s-1-j)j}$ ways to add the remaining $\ell+s-1-j$ points from the $j$-flat spanned by $T'$. Then the total number of $(\ell+s)$-tuples that lie on an $\ell$-flat is at most \begin{equation*}
    \sum_{j=1}^{\ell} n^{(j+1)d+(\ell+s-1-j)j}\leq \sum_{j=1}^{\ell} n^{(j+1)d+(\ell+s-1-j)\ell} \leq \sum_{j=1}^{\ell} n^{(\ell+1)d+(s-1)\ell} \leq \ell \cdot n^{(\ell+1)d+(s-1)\ell},
\end{equation*} where the second inequality uses $\ell \leq d$.
\end{proof}

\section{Proof of Theorem~\ref{main}}

In this section, we use the hypergraph container method to prove Theorem \ref{main}. We shall assume basic notions about hypergraphs and follow the strategy outlined in \cite{BS}. Let $\mathcal{H}=(V(\mathcal{H}),E(\mathcal{H}))$ denote a $r$-uniform hypergraph. For any $U\subset V(\mathcal{H})$, its \textit{degree} is the number of edges containing $U$. For each $i \in [r]$, we use $\Delta_{i}(\mathcal{H})$ to denote the maximum degree among all $U$ of size $i$. For $S \subset V(\mathcal{H})$, we use $\mathcal{H}[S]$ to denote the \textit{induced sub-hypergraph} on $S$. We shall use the following version of the hypergraph container lemma, which is Theorem 4.2 in~\cite{morris2016534}.

\begin{lemma}\label{HCL}
Let $r \ge 2$ be an integer and $c>0$ be sufficiently small with respect to $r$. If $\mathcal{H}=(V,E)$ is an $r$-uniform hypergraph and $0<\tau<1/2$ is a real number such that \begin{equation*}
        \Delta_i(\mathcal{H}) \leq c \cdot \tau^{i-1} \frac{|E|}{|V|} \quad\text{for all $2\leq i\leq r$,}
\end{equation*} then there exists a family $\mathcal{C}$ of vertex subsets of $\mathcal{H}$ with the following properties:
\begin{itemize}
            \item[(a)] Every independent set of $\mathcal{H}$ is contained in some $C \in \mathcal{C}$.
            \item[(b)] $|\mathcal{C}| \leq \exp\left(c^{-1} \cdot \tau |V| \cdot \log(1/\tau)\right)$.
            \item[(c)] For every $C \in \mathcal{C}$, we have $|E(\mathcal{H}[C])| \leq (1 - c)|E|$.
\end{itemize}
\end{lemma}

The main result of this section is the following theorem.
\begin{theorem}\label{maink}
Let $k,\ell$ be fixed integers such that $\ell \geq k\geq 2$ and $k$ is even. Then for any $\epsilon > 0$, there is a constant $d= d(\epsilon,k,\ell)$ such that the following holds. For infinitely many values of $N$, there is a set $V$ of $N$ points in $\mathbb{R}^{d}$ such that no $\ell+3$ members of $V$ lie on an $\ell$-flat, and every subset of $V$ without $k+2$ members on a $k$-flat has size at most $O\left(N^{\frac{\ell + 2}{2(k + 1)}  + \epsilon}\right)$.  
\end{theorem}

Before we prove Theorem~\ref{maink}, let us show that it implies Theorem~\ref{main}.

\begin{proof}[Proof of Theorem \ref{main}]
In dimensions $d'\geq 3$ where $d'$ is odd, we apply Theorem \ref{maink} with $k = \ell = d' - 1$ to obtain a point set $V$ of size $N$ in $\mathbb{R}^d$ with the property that no $d' +2$ members lie on a $(d' - 1)$-flat, and every subset of size $\Omega\left( N^{\frac{1}{2} + \frac{1}{2d'} + \epsilon} \right)$ contains $d' + 1$ members on a $(d' -1)$-flat.  By projecting $V$ to a generic $d'$-dimensional subspace of $\mathbb{R}^d$, we obtain $N$ points in $\mathbb{R}^{d'}$ with no $d' + 2$ members on a common hyperplane, and every subset in general position has size $O\left( N^{\frac{1}{2} + \frac{1}{2d'} + \epsilon} \right)$.

In dimensions $d' \geq 4$ where $d'$ is even, we apply Theorem \ref{maink} with $k = d'- 2$ and $\ell  = d' -1$ to obtain a point set $V$ of size $N$ in $\mathbb{R}^d$ with the property that no $d' +2$ members on a $(d'-1)$-flat, and every subset of size $\Omega\left( N^{\frac{1}{2} + \frac{1}{d' - 1} + \epsilon} \right)$ contains $d'$ members on a $(d' -2)$-flat. By adding another point from this subset, we obtain $d' + 1$ members on a $(d' - 1)$-flat. Hence, by projecting to $V$ a generic $d'$-dimensional subspace of $\mathbb{R}^d$, we obtain $N$ points in $\mathbb{R}^{d'}$ with no $d' + 2$ members on a common hyperplane, and every subset in general position has size $O\left( N^{\frac{1}{2} + \frac{1}{d' - 1} + \epsilon} \right)$.

Since $\epsilon$ is arbitrary and $N$ grows to infinity, we can conclude the proof of Theorem~\ref{main} after renaming $d'$ to $d$.
\end{proof}

\begin{proof}[Proof of Theorem \ref{maink}]

Let $d$ be a sufficiently large integer and $n$ tend to infinity. We denote $\mathcal{H}$ as the hypergraph with $V(\mathcal{H})=[n]^d$ and $E(\mathcal{H})$ consisting of non-degenerate $(k+2)$-tuples $T$ such that $T$ lies on a $k$-flat. We shall construct a rooted tree $\mathfrak{T}$ whose nodes are labelled with vertex subsets of $\mathcal{H}$ as follows. We start with $\mathfrak{T}$ consisting of one root node labelled with $V(\mathcal{H})$. Iteratively, if there is a leaf $x \in \mathfrak{T}$ whose labelled set $C_x$ has size at least $n^{\frac{k}{k + 1}d + k}$, we apply Lemma~\ref{HCL} to $\mathcal{H}[C_x]$ with $\tau = n^{-\frac{k}{k + 1}d  + \delta +  \epsilon}$ where $\delta$ is defined by $|C_x| = n^{d - \delta}$. As a consequence, Lemma~\ref{HCL} produces a collection $\mathcal{C}$ of subsets of $C_x$. Then we create a child of $x$ in $\mathfrak{T}$ labelled by $C$ for each $C \in \mathcal{C}$. The iteration continues until there is no leaf $x\in \mathfrak{T}$ with $|C_x| \geq n^{\frac{k}{k + 1}d + k}$.

During the interative construction of $\mathfrak{T}$, we need to verify the hypothesis of Lemma~\ref{HCL}, that is, \begin{equation*}
    \Delta_i(\mathcal{H}[C_x]) \leq c \cdot \tau^{i-1} \frac{|E(\mathcal{H}[C_x])|}{|V(\mathcal{H}[C_x])|} \quad\text{for all $2\leq i\leq k+2$.}
\end{equation*} To check this, we use Lemma~\ref{maxdegree} to upper bound $\Delta_i(\mathcal{H}[C_x])$ for $2\leq i < k+2$, and use the trivial bound $\Delta_i(\mathcal{H}[C_x])\leq 1$ for $i = k+2$. On the other hand, we use Lemma~\ref{supersaturation} to lower bound $|E(\mathcal{H}[C_x])|$. We shall use $n^{d - \delta} = |V(\mathcal{H}')| \geq n^{\frac{k}{k + 1}d + k}$ as well. Since this is a straightforward computation, whose detail will be given as Claim~\ref{tauapp} in the proof of Theorem~\ref{thm_remark}, we skip it here.

Now, we analyze this rooted tree $\mathfrak{T}$. According to Lemma~\ref{HCL}(c), if $y$ (labelled with $C_y$) is a child of $x$ (labelled with $C_x$) in $\mathfrak{T}$, the number of edges induced by $C_y$ shrinks from that by $C_x$ by a constant factor $(1-c)$. On the other hand, a reasonably large set induces many edges in $\mathcal{H}$ by Lemma~\ref{supersaturation} (assuming $d$ is large). This means the height of $\mathfrak{T}$ is upper bounded by $O(\log n)$, and in particular our iterative construction ends. According to Lemma~\ref{HCL}(b), the number of children of any node $x$ in $\mathfrak{T}$ is at most \begin{equation*}
    |\mathcal{C}| \leq \exp({c^{-1} \cdot \tau |C_x| \cdot \log(1/\tau)}) \leq \exp\left(O\left(n^{\frac{d}{k + 1} + \epsilon}\cdot \log{n}\right)\right).
\end{equation*} Therefore, let $\mathfrak{C}$ be the collection of sets labelling the leaves of $\mathfrak{T}$.  Hence, we have \begin{equation*}
    |\mathfrak{C}| \leq \exp\left(O\left(n^{\frac{d}{k + 1} + \epsilon}\cdot \log^2{n}\right)\right) \quad\text{and}\quad |C| \leq n^{\frac{k}{k + 1}d + k} ~\text{for all $C\in \mathfrak{C}$}.
\end{equation*} Furthermore, if $I$ is an independent set of $\mathcal{H}$ that is contained in a vertex subset $C_x$ labelling a non-leaf node $x$, then by  the construction of $\mathfrak{T}$ and Lemma~\ref{HCL}(a), there exists a child $y$ of $x$ in $\mathfrak{T}$ whose labelling set $C_y$ contains $I$. This implies every independent set of $\mathcal{H}$ is contained in some member of $\mathfrak{C}$. Elements in this collection $\mathfrak{C}$ are called containers.

Next, we randomly select a subset of $[n]^d$ by keeping each point independently with probability $p$. Let $S$ be the set of selected elements. Then for each $(\ell + 3)$-tuple $T$ in $S$ that lies on an $\ell$-flat, we delete one point from $T$. We denote the resulting set of points by $S'$. By Lemma~\ref{count}, we have \begin{equation*}
   \mathbb{E}[|S'|] \geq pn^d - p^{\ell+3}\ell n^{(\ell+1)d+2\ell}.
\end{equation*} By setting $p=(2\ell)^{-\frac{1}{\ell+2}}n^{-\frac{\ell}{\ell+2}(d+2)}$, we have
\begin{equation*}
   \mathbb{E}[|S'|] \geq \frac{pn^d}{2} =\Omega\left(n^{\frac{2(d-\ell)}{\ell+2}}\right).
\end{equation*}

Finally, we set $m = n^{\frac{d}{k + 1} + 2\epsilon}$. Let $X$ denote the number of independent sets of $\mathcal{H}$ in $S'$ with cardinality $m$. Using the family of containers, we have \begin{align*}
    \mathbb{E}[X] & \leq  |\mathfrak{C}|\cdot \binom{n^{\frac{k}{k + 1}d + k}}{m} \cdot p^{m} \\
    &\leq  \exp\left(O\left(n^{\frac{d}{k + 1} + \epsilon}\cdot \log^2{n}\right)\right) \cdot \left(\frac{e \cdot n^{\frac{k}{k + 1}d + k}}{m}\right)^m p^m \\ 
    &\leq  \exp\left(O\left(n^{\frac{d}{k + 1} + \epsilon}\cdot \log^2{n}\right)\right) \cdot \left(e \cdot n^{\frac{k-1}{k + 1}d + k -2\epsilon}\right)^m \left((2\ell)^{-\frac{1}{\ell+2}} \cdot n^{-\frac{\ell}{\ell+2}(d+2)}\right)^m \\
    &\leq \exp\left(O\left(n^{\frac{d}{k + 1} + \epsilon}\cdot \log^2{n}\right)\right) \cdot \left(\frac{1}{2}\right)^m\\
    &\leq o(1).
\end{align*} Here, the fourth inequality uses the following consequence of $k\leq \ell$ and $d$ being large:\begin{equation*}
    \frac{k-1}{k + 1}d + k -2\epsilon-\frac{\ell}{\ell+2}(d+2) < 0.
\end{equation*}

Notice that $|S'|$ is exponentially concentrated around its mean by Chernoff's inequality. Therefore, some realization of $S'$ satisfies: $|S'|=N=\Omega(n^{2(d-\ell)/(\ell + 2)})$; $S'$ contains no $(\ell+3)$-tuples on a $\ell$-flat; and $\mathcal{H}[S']$ does not contain an independent set of $\mathcal{H}$ with cardinality \begin{equation*}
    m = n^{\frac{d}{k + 1} + 2\epsilon} = O\left( N^{\frac{\ell + 2}{2(k + 1)} + \frac{(\ell+2)\ell}{2(k+1)(d-\ell)} + \frac{\ell + 2}{d-\ell}\epsilon}\right) \leq O\left( N^{\frac{\ell + 2}{2(k + 1)} + \epsilon}\right).
\end{equation*} Here, we assume $d = d(\epsilon,k,\ell)$ is sufficiently large so that\begin{equation*}
    \frac{(\ell+2)\ell}{2(k+1)(d-\ell)} + \frac{\ell + 2}{d-\ell}\epsilon \leq \epsilon.
\end{equation*} Notice that $S'$ not containing an independent set of size $m$ means every subset of $S'$ of size $m$ contains $k+2$ points on a $k$-flat. We conclude the proof by renaming $S'$ to $V$.
\end{proof}

\section{Proof of Theorem~\ref{thm_remark}}

In this section, we prove Theorem \ref{thm_remark}.  The proof is essentially the same as in the previous section with a different choice of parameters.  For the reader's convenience, we include the details here. We start by proving the following theorem.

\begin{theorem}\label{mainapp}
Let $k,\ell,s$ be fixed integers such that $\ell\geq k\geq 2$, $s\geq 2$, $k$ is even, and $\frac{2\ell+s-1}{\ell+s-1} < \frac{2k}{k+1}$. Then for any $\epsilon > 0$, there is a constant $d = d(\epsilon,k,\ell,s)$ such that the following holds. For infinitely many values of $N$, there is a set $V$ of $N$ points in $\mathbb{R}^{d}$ such that no $\ell+s$ members of $V$ lie on an $\ell$-flat, and every subset of $V$ without $k + 2$ members on a $k$-flat has size at most $O\left( N^{\frac{1}{2}+\epsilon} \right)$.
\end{theorem}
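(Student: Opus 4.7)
The plan is to follow the proof of Theorem \ref{maink} essentially verbatim, changing only the random sparsification step at the end. I would construct the same $(k+2)$-uniform hypergraph $\mathcal{H}$ on vertex set $[n]^d$ whose edges are the non-degenerate $(k+2)$-tuples lying on a $k$-flat, and run the same iterative container process with the same parameters $\tau(i,j)=n^{-\frac{k}{k+1}d+\gamma+\alpha}$ and $\epsilon(i,j)=c_1 n^{-\alpha}$. Lemmas \ref{supersaturation} and \ref{maxdegree} apply verbatim, since they only depend on $d$ and $k$, not on $r$ or $s$. After $O(1/\alpha)$ iterations this produces a container family $\mathcal{C}$ with $|\mathcal{C}|\leq 2^{c n^{d/(k+1)+\alpha}\log^2 n}$, every container having size at most $n^{\frac{k}{k+1}d+k}$.

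The step that does need adjustment is the random construction of $S'$. By the same counting as in Claim \ref{count}, the number of $(r+s)$-tuples of $[n]^d$ lying on an $r$-flat is at most $O(n^{(r+1)d+(s-1)r})$: the dominant contribution comes from choosing $r+1$ affinely independent points (in $n^{(r+1)d}$ ways) and then adding the remaining $s-1$ points from the spanned $r$-flat (in $n^{(s-1)r}$ ways). I would sample each point of $[n]^d$ independently with probability $p=c_s n^{-r(d+s-1)/(r+s-1)}$, then delete one point from every $(r+s)$-tuple on an $r$-flat. This choice of $p$ is calibrated so that the expected number of deleted points is at most $pn^d/2$, yielding $\mathbb{E}[|S'|]\geq pn^d/2 = \Omega(n^{(s-1)(d-r)/(r+s-1)})$; Chernoff then lets us fix a realization with $|S'|=N=\Omega(n^{(s-1)(d-r)/(r+s-1)})$ containing no $r+s$ points on an $r$-flat.

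The rest of the argument proceeds exactly as in Theorem \ref{maink}: letting $X$ count the independent sets of $\mathcal{H}$ of size $m=(c_7/\alpha)n^{d/(k+1)+2\alpha}$ contained in $S'$, one bounds $\mathbb{E}[X]\leq |\mathcal{C}|\binom{n^{\frac{k}{k+1}d+k}}{m}p^m$ and shows it is $o(1)$ by essentially the same calculation, so some $S'$ has no independent set of this size. The last check is that $m\leq cN^{1/2+\alpha}$, which, ignoring the slack provided by $\alpha$ and by taking $d$ large, reduces to $\frac{1}{k+1}\leq \frac{s-1}{2(r+s-1)}$, i.e.\ $r<(s-1)(k-1)/2$. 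A short manipulation shows this is equivalent to the hypothesis $\frac{r+(s-1)/2}{r+s-1}<\frac{k}{k+1}$ stated in the theorem. The only real obstacle is the algebraic bookkeeping linking the count of forbidden tuples to this threshold condition; there is no new combinatorial content beyond the proof of Theorem \ref{maink}.
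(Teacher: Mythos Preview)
There is a genuine gap: with your choice of container parameters the bound $\mathbb{E}[X]=o(1)$ fails. In the estimate
\[
\mathbb{E}[X]\leq |\mathcal{C}|\cdot\left(\frac{e\,n^{\frac{k}{k+1}d+k}\,p}{m}\right)^{m},
\]
the base has $n$-exponent
\[
\frac{k}{k+1}d+k-\frac{r(d+s-1)}{r+s-1}-\frac{d}{k+1}-2\alpha
= d\left(\frac{k-1}{k+1}-\frac{r}{r+s-1}\right)+O(1).
\]
A short computation shows $\frac{k-1}{k+1}-\frac{r}{r+s-1}=\frac{(k-1)(s-1)-2r}{(k+1)(r+s-1)}$, and the hypothesis $\frac{r+(s-1)/2}{r+s-1}<\frac{k}{k+1}$ is \emph{equivalent} to $(k-1)(s-1)-2r>0$. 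Hence, precisely under the stated hypothesis, the base tends to infinity with $d$, and $\mathbb{E}[X]\to\infty$. Increasing $m$ to force the base below $1$ does not help either: the resulting $m$ then exceeds $cN^{1/2+\alpha}$ by the same margin (the two constraints are in direct tension when the container threshold is $n^{\frac{k}{k+1}d+k}$).

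The paper resolves this by not reusing the container threshold from Theorem~\ref{maink}. It introduces a free parameter $x\leq\frac{k}{k+1}$, runs the iteration down to containers of size $n^{xd+k}$ with $\tau=n^{-xd+\gamma+\alpha}$, and takes $m\approx n^{(1-x)d}$. Claim~\ref{tauepsilon} still goes through as long as $x\leq\frac{k}{k+1}$. The base of the $\mathbb{E}[X]$ estimate is now governed by $2x-\frac{2r+s-1}{r+s-1}$, so one chooses $x$ essentially equal to $\frac{r+(s-1)/2}{r+s-1}$; the hypothesis is exactly what guarantees this $x$ does not exceed $\frac{k}{k+1}$. With this $x$ one gets $m\approx n^{(s-1)d/(2(r+s-1))}\approx N^{1/2}$, and both constraints are satisfied simultaneously. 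So the new ingredient you are missing is the extra degree of freedom $x$ in the container iteration, not merely ``algebraic bookkeeping''.
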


\begin{proof}

Just as before, let $\mathcal{H}$ be the hypergraph with $V(\mathcal{H})=[n]^d$ and $E(\mathcal{H})$ consisting of non-degenerate $(k+2)$-tuples $T$ such that $T$ lies on a $k$-flat. We let $q=q(k,r,s)$ be a quantity that will be determined later. We again construct a rooted tree $\mathfrak{T}$ whose nodes are labelled with vertex subsets of $\mathcal{H}$. We start with $\mathfrak{T}$ consisting of one root node labelled with $V(\mathcal{H})$. Iteratively, if there is a leaf $x \in \mathfrak{T}$ whose labelled set $C_x$ has size at least $n^{qd + k}$, we apply Lemma~\ref{HCL} to $\mathcal{H}' = \mathcal{H}[C_x]$ with $\tau = n^{-qd  + \delta +  \epsilon}$ where $\delta$ is defined by $|C_x| = n^{d - \delta}$. We shall use the claim below to verify the hypothesis of Lemma~\ref{HCL}. As a consequence, Lemma~\ref{HCL} produces a collection $\mathcal{C}$ of subsets of $C_x$. Then we create a child of $x$ in $\mathfrak{T}$ labelled by $C$ for each $C \in \mathcal{C}$. The iteration continues until there is no leaf $x\in \mathfrak{T}$ with $|C_x| \geq n^{qd + k}$.

\begin{claim}\label{tauapp}
If $\frac{1}{2} < q \leq \frac{k}{k+1}$ and $\mathcal{H}'$ defined as above, then \begin{equation*}
    \Delta_i(\mathcal{H}') \leq c \cdot \tau^{i-1} \frac{|E(\mathcal{H}')|}{|V(\mathcal{H}')|} \quad\text{for all $2\leq i\leq k+2$,}
\end{equation*} where $c$ is the constant in Lemma~\ref{HCL} depending only on $k$.
\end{claim}
\begin{proof}[Proof of Claim]
First, we notice that \begin{equation}\label{tauapp_eq}
    n^{d - \delta} = |V(\mathcal{H}')| \geq n^{qd + k} \implies \delta \leq d - qd - k.
\end{equation} Assuming $d$ is large, we have $|E(\mathcal{H}')| \geq \Omega(n^{(k + 1)d - (k + 2)\delta})$ by Lemma~\ref{supersaturation}.

For $2\leq i < k+2$, Lemma~\ref{maxdegree} gives us $\Delta_i(\mathcal{H}') \leq n^{(k+1-i)(d-\delta)+k}$. Hence, it suffices to check \begin{equation*}
    n^{(k+1-i)(d-\delta) + k} \ll \left(n^{-qd+\delta+\epsilon}\right)^{i-1} \cdot \frac{n^{(k+1)d-(k+2)\delta}}{n^{d-\delta}}.
\end{equation*} Simplifying and comparing the exponents over $n$, this is implied by \begin{equation*}
    (i-1) d + k + (i-1)\epsilon > (i-1) q d + \delta.
\end{equation*} Since $d$ is sufficiently large, it suffices to compare the coefficients of $d$. Applying \eqref{tauapp_eq} and simplifying the terms, the inequality above is implied by $i-1 \geq (i-2)q + 1$, which is true by our hypothesis.

For $i = k+2$, we have $\Delta_i(\mathcal{H}') \leq 1$ trivially. Hence, it suffices to check \begin{equation*}
    1 \ll \left(n^{-qd+\delta+\epsilon}\right)^{k+1} \cdot \frac{n^{(k+1)d-(k+2)\delta}}{n^{d-\delta}}.
\end{equation*} Simplifying and comparing the exponents over $n$, this is implied by \begin{equation*}
    (k+1)qd < kd + (k+1)\epsilon.
\end{equation*} Again, since $d$ is sufficiently large, it suffices to compare the coefficients of $d$. The inequality above is implied by $(k+1) q \leq k$, which is true by our hypothesis.
\end{proof}

We can analyze this rooted tree $\mathfrak{T}$ using arguments similar to the previous section. We can conclude that there exists a collection $\mathfrak{C}$ of vertex subsets of $\mathcal{H}$ with \begin{equation*}
    |\mathfrak{C}| \leq \exp\left(O\left(n^{d-qd + \epsilon}\cdot \log^2{n}\right)\right) \quad\text{and}\quad |C| \leq n^{qd + k} ~\text{for all $C\in \mathfrak{C}$}.
\end{equation*} and every independent set of $\mathcal{H}$ is contained in some member of $\mathfrak{C}$.

Next, we randomly select a subset of $[n]^d$ by keeping each point independently with probability $p$. Let $S$ be the set of selected elements. Then for each $(\ell + s)$-tuple $T$ in $S$ that lies on an $\ell$-flat, we delete one point from $T$. We denote the resulting set of points by $S'$. By Lemma~\ref{count}, we have \begin{equation*}
   \mathbb{E}[|S'|] \geq pn^d - p^{\ell+s}\ell n^{(\ell+1)d+(s-1)\ell}.
\end{equation*} By setting $p=(2\ell)^{-\frac{1}{\ell+s-1}}n^{-\frac{\ell}{\ell+s-1}(d+s-1)}$, we have
\begin{equation*}
   \mathbb{E}[|S'|] \geq \frac{pn^d}{2} =\Omega\left(n^{\frac{(s-1)(d-\ell)}{\ell+s-1}}\right).
\end{equation*}

Finally, we set $m = n^{d-qd + 2\epsilon}$. Let $X$ denote the number of independent sets of $\mathcal{H}$ in $S'$ with cardinality $m$. With a foresight soon to be self-evident, we choose \begin{equation}\label{qchoice}
    q = \frac{1}{2} \cdot \frac{2\ell+s-1}{\ell+s-1} + \frac{1}{2d} \cdot \frac{\ell(s-1)}{\ell+s-1} -\frac{k}{2d}.
\end{equation} We remark that our hypothesis on $k,\ell,s$ implies $\frac{1}{2}< q \leq \frac{k}{k + 1}$ assuming $d$ is large, hence Claim~\ref{tauapp} can be applied in construction of $\mathfrak{T}$. 

Using the family $\mathfrak{C}$, we can estimate \begin{align*}
    \mathbb{E}[X] & \leq  |\mathfrak{C}|\cdot \binom{n^{qd + k}}{m} \cdot p^{m} \\
    &\leq  \exp\left(O\left(n^{d-qd + \epsilon}\cdot \log^2{n}\right)\right) \cdot \left(\frac{e \cdot n^{qd + k}}{m}\right)^m p^m \\ 
    &\leq  \exp\left(O\left(n^{d-qd + \epsilon}\cdot \log^2{n}\right)\right) \cdot \left(e \cdot n^{(2q-1)d + k -2\epsilon}\right)^m \left((2\ell)^{-\frac{1}{\ell+s-1}}n^{-\frac{\ell}{\ell+s-1}(d+s-1)}\right)^m \\
    &\leq \exp\left(O\left(n^{d-qd + \epsilon}\cdot \log^2{n}\right)\right) \cdot \left(\frac{1}{2}\right)^m\\
    &\leq o(1).
\end{align*} Here, the fourth inequality uses the following consequence of \eqref{qchoice}: \begin{equation*}
    (2q-1)d + k -2\epsilon -\frac{\ell}{\ell+s-1}(d+s-1) < 0.
\end{equation*}

Notice that $|S'|$ is exponentially concentrated around its mean by Chernoff's inequality. Therefore, some realization of $S'$ satisfies: $|S'|=N=\Omega\left(n^{\frac{(s-1)(d-\ell)}{\ell+s-1}}\right)$; $S'$ contains no $(\ell+s)$-tuples on a $\ell$-flat; and $\mathcal{H}[S']$ does not contain an independent set of $\mathcal{H}$ with cardinality \begin{equation*}
    m = n^{d - qd + 2\epsilon} = O\left( N^{\frac{1}{2}+ \left(\frac{k}{2} + 2\epsilon\right)\cdot\frac{\ell+s-1}{(s-1)(d-\ell)}} \right) \leq O\left( N^{\frac{1}{2} + \epsilon}\right).
\end{equation*} Here, we assume $d = d(\epsilon,k,\ell,s)$ is sufficiently large so that\begin{equation*}
    \left(\frac{k}{2} + 2\epsilon\right)\cdot\frac{\ell+s-1}{(s-1)(d-\ell)} \leq \epsilon.
\end{equation*} Since $S'$ does not contain an independent set of size $m$, every subset of $S'$ of size $m$ contains $k+2$ points on a $k$-flat. We conclude the proof by renaming $S'$ to $V$.
\end{proof}

\begin{proof}[Proof of Theorem \ref{thm_remark}]

In dimensions $d'\geq 3$ where $d'$ is odd, we obtain an upper bound for $\alpha_{d',s'}(N)$ with $d's' + 2 > 2d' + 2s'$. We set $k = \ell = d'-1$ and $s = s' + 1$, so we can verify $\frac{2\ell+s-1}{\ell+s-1} < \frac{2k}{k+1}$. Hence we can apply Theorem~\ref{mainapp} to obtain a point set $V$ of size $N$ in $\mathbb{R}^{d}$ with the property that no $d' +s'$ members lie on a $(d' - 1)$-flat, and every subset of size $\Omega(N^{\frac{1}{2} + \epsilon})$ contains $d' + 1$ members on a $(d' -1)$-flat. By projecting $V$ to a generic $d'$-dimensional subspace of $\mathbb{R}^d$, we obtain $N$ points in $\mathbb{R}^{d'}$ with no $d' + s'$ members on a common hyperplane, and every subset in general position has size $O( N^{\frac{1}{2} +  \epsilon})$.

In dimensions $d' \geq 4$ where $d'$ is even, we obtain an upper bound for $\alpha_{d',s'}(N)$ with $d's' + 2 > 2d' + 3s'$. We set $k = d'- 2$, $\ell  = d' -1$, and $s  = s' + 1$, so we can verify $\frac{2\ell+s-1}{\ell+s-1} < \frac{2k}{k+1}$. Hence we can apply Theorem~\ref{mainapp} to obtain a point set $V$ of size $N$ in $\mathbb{R}^d$ with the property that no $d' +s'$ members on a $(d'-1)$-flat, and every subset of size $\Omega(N^{\frac{1}{2} + \epsilon})$ contains $d'$ members on a $(d' -2)$-flat. By adding another point from this subset, we obtain $d' + 1$ members on a $(d' - 1)$-flat. Hence, by projecting to $V$ a generic $d'$-dimensional subspace of $\mathbb{R}^d$, we obtain $N$ points in $\mathbb{R}^{d'}$ with no $d' + s'$ members on a common hyperplane, and every subset in general position has size $O( N^{\frac{1}{2} +  \epsilon})$.

Since $\epsilon$ is arbitrary and $N$ grows to infinity, we can conclude the proof of Theorem~\ref{thm_remark} after renaming $d'$ to $d$ and $s'$ to $s$.
\end{proof}

\section{Proof of Theorem~\ref{main2}}

In this section, we will give a proof of Theorem~\ref{main2}. Let $V\subset [n]^d$ such that there are no $k +2$ points that lie on a $k$-flat. In \cite{L}, Lefmann showed that $|V| \leq O\left(n^{\frac{d}{\lfloor (k + 2)/2\rfloor}}\right)$. To see this, assume that $k$ is even and consider all elements of the form $v_1 + \dots  + v_{\frac{k}{2} + 1}$, where $v_i\neq v_j$ and $v_i \in V$.  All of these elements are distinct, since otherwise we would have $k + 2$ points on a $k$-flat. In other words, the equation \begin{equation*}
\left(\textbf{x}_1 + \dots + \textbf{x}_{\frac{k}{2} + 1}\right)  - \left(\textbf{x}_{\frac{k}{2} + 2} + \dots + \textbf{x}_{k + 2}\right)  = \textbf{0},
\end{equation*} does not have a solution with $\{\textbf{x}_1 , \dots , \textbf{x}_{\frac{k}{2} + 1}\}$ and $\{\textbf{x}_{\frac{k}{2} + 2} , \dots , \textbf{x}_{k + 2}\}$ being two different $(\frac{k}{2} + 1)$-tuples of $V$. Therefore, we have $\binom{|V|}{\frac{k}{2} + 1} \leq (kn)^d$, and this implies Lefmann's bound.  

More generally, let us consider the equation \begin{equation}\label{geneqn}
    c_1\textbf{x}_1+c_2\textbf{x}_2+\dots+c_r\textbf{x}_r=\textbf{0},
\end{equation} with constant coefficients $c_i \in \mathbb{Z}$ and $\sum_i c_i = 0$. Here, the variables $\textbf{x}_i$ takes value in $\mathbb{Z}^d$. A solution $(\textbf{x}_1,\dots, \textbf{x}_r)$ to equation (\ref{geneqn}) is called \emph{trivial} if there is a partition $\mathcal{P}: [r] = \mathcal{I}_1\cup \dots \cup \mathcal{I}_t$, such that $\textbf{x}_j = \textbf{x}_{\ell}$ if and only if $j,\ell\in \mathcal{I}_i$, and $\sum_{j \in \mathcal{I}_i} c_j = 0$ for all $i\in [t]$. In other words, being trivial means that, after combining like terms, the coefficient of each $\textbf{x}_i$ becomes zero. Otherwise, we say that the solution $(\textbf{x}_1,\dots, \textbf{x}_r)$ is \emph{non-trivial}. A natural extremal problem is to determine the maximum size of a set $A \subset [n]^d$ with only trivial solutions to (\ref{geneqn}).  When $d = 1$, this is a classical problem in additive number theory, and we refer the interested reader to \cite{R,OB,LV,CT}.

By combining the arguments of Cilleruelo and Timmons \cite{CT} and Jia \cite{J}, we establish the following theorem.
\begin{theorem}\label{multifold}
Let $d,r$ be fixed positive integers. Suppose $V\subset [n]^d$ has only trivial solutions to each equation of the form\begin{equation}\label{ineqn}c_1\left((\textbf{x}_1 + \dots + \textbf{x}_r) - (\textbf{x}_{r + 1} + \dots + \textbf{x}_{2r})\right)  = c_2\left((\textbf{x}_{2r + 1} + \dots + \textbf{x}_{3r}) - (\textbf{x}_{3r+1} + \dots + \textbf{x}_{4r})\right),
\end{equation} for integers $c_1,c_2$ such that $1 \leq c_1,c_2 \leq n^{\frac{d}{2rd + 1}}$. Then we have\begin{equation*}
    |V| \leq   O\left(n^{\frac{d}{2r}\left(1 - \frac{1}{2rd + 1}\right)}\right).
\end{equation*}
\end{theorem}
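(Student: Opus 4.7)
The plan is to combine a Sidon-type count on iterated differences with a Cilleruelo--Timmons/Jia-style multiplicative dilation. Let $m=|V|$ and $C=n^{d/(2rd+1)}$. Introduce the iterated difference set
\[
D = \Bigl\{\sum_{i=1}^r a_i - \sum_{i=1}^r b_i : \mathbf{a}, \mathbf{b} \in V^r\Bigr\} \subset \mathbb{Z}^d \cap [-rn,rn]^d,
\]
and study its dilates $cD$ for $c \in [1,C]$. The gain over Lefmann's bound $m \leq O(n^{d/(2r)})$ should come from showing that the union $\bigcup_{c\in[1,C]} cD$ is essentially as large as $C|D|$, making efficient use of the larger box $[-Crn,Crn]^d$.

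First, applying the hypothesis at $(c_1,c_2)=(1,1)$ gives that every solution in $V^{4r}$ to $\sum a^1_i - \sum b^1_i = \sum a^2_i - \sum b^2_i$ is trivial, and a direct enumeration of trivial partitions of $[4r]$ bounds the number of such solutions by $O(m^{2r})$. Applying Cauchy--Schwarz to the representation function $r_D(y) = \#\{(\mathbf{a}, \mathbf{b}) \in V^{2r} : \sum a_i - \sum b_i = y\}$ then yields $|D| = \Omega(m^{2r})$, the standard Sidon-energy lower bound.

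Next, I would exploit the full hypothesis. For any $(c_1,c_2) \in [1,C]^2$ with $c_1 \neq c_2$, a common element $y = c_1 d_1 = c_2 d_2 \in c_1 D \cap c_2 D$ (with $d_i \in D$) produces a solution of~(\ref{ineqn}) that must be trivial. A case analysis on coefficients shows that in any trivial partition each class $K$ must satisfy $(p_1^K - q_1^K)c_1 + (p_2^K - q_2^K)c_2 = 0$, where $p_i^K, q_i^K$ count slots of coefficient $\pm c_i$ inside $K$ and satisfy $|p_i^K - q_i^K| \leq r$. When $\max(c_1,c_2)/\gcd(c_1,c_2) > r$, this forces $p_1^K = q_1^K$ and $p_2^K = q_2^K$ for every class, which makes $d_1 = d_2 = 0$ and hence $c_1 D \cap c_2 D = \{0\}$. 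The non-generic pairs (where $\max/\gcd \leq r$) number only $O(r^2 C)$, a negligible fraction of $C^2$. A Cauchy--Schwarz applied to the multiplicity function $f(y)=\#\{(c,d) \in [1,C]\times D : cd = y\}$, together with a Jia-type estimate absorbing the bad-pair contributions, then produces $|\bigcup_c cD| = \Omega(C|D|)$. Combining this with the box bound $|\bigcup_c cD| \leq O((Cn)^d)$ gives $C|D| \leq O((Cn)^d)$, and substituting $|D| = \Omega(m^{2r})$ together with the chosen $C = n^{d/(2rd+1)}$ delivers the claimed bound after algebra.

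The main obstacle will be the handling of the bad pairs $(c_1,c_2)$ with small rational ratio: their ``coupled'' trivial partitions allow $c_1 D \cap c_2 D$ to be as large as $|D|$ in principle, so absorbing their total contribution while preserving the gain of a factor $C$ in the union count is delicate. This is precisely where the Jia-style multiplicative estimate---bounding the number of $(d_1, d_2) \in D^2$ with $a d_1 = b d_2$ for small coprime $a, b \leq r$---must be combined with the Cilleruelo--Timmons bookkeeping on the multiplicity $f(y)$ to close the argument.
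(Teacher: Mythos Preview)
Your argument has a decisive arithmetic error in the final combination step. You obtain $|D|=\Omega(m^{2r})$ correctly, and the near-disjointness of the dilates would indeed give $\bigl|\bigcup_{c\le C} cD\bigr|=\Omega(C|D|)$; the box bound $\bigl|\bigcup_c cD\bigr|\le O((Cn)^d)$ is also correct. But combining these yields only
\[
|D|\;\le\;O\bigl(C^{\,d-1}n^{d}\bigr),
\]
which for $C\ge 1$ is \emph{weaker} than the trivial bound $|D|\le O(n^d)$. Substituting $|D|\ge\Omega(m^{2r})$ and $C=n^{d/(2rd+1)}$ gives $m\le O\bigl(n^{d^2(2r+1)/(2r(2rd+1))}\bigr)$, which equals Lefmann's $O(n^{d/(2r)})$ when $d=1$ and is strictly worse for every $d\ge 2$. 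The conceptual problem is that dilating $D$ by $c$ enlarges the ambient box by a factor $C$ in each coordinate, costing $C^d$ in volume, while near-disjointness gains only one factor of $C$ in cardinality. No bookkeeping on the bad pairs can rescue this; the loss is already present in the generic case.

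The paper's proof runs the dilation in the opposite direction: it \emph{contracts}. One works with the sumset $S_r=\{v_1+\cdots+v_r:v_i\in V\text{ distinct}\}\subset [rn]^d$, which has $|S_r|=\binom{|V|}{r}$ by the $(c_1,c_2)=(1,1)$ case, and for each $j\in[m]$ and residue $w\in\mathbb{Z}_j^d$ sets $U_{j,w}=\{u:ju+w\in S_r\}$; crucially $U_{j,w}$ lives in a box of side $rn/j$, not $rnj$. The full hypothesis is used exactly along the lines you sketched: a collision $u_1-u_2=u_3-u_4$ between scales $j,j'$ forces $j'(s_1-s_2)=j(s_3-s_4)$, a non-trivial instance of~(\ref{ineqn}), so the zero-overlap difference representations collapse to at most one \emph{globally across all $(j,w)$}. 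This is fed into the Pl\"unnecke-type inequality $\frac{(|U||T|)^2}{|U+T|}\le\sum_x|\Phi_{U-U}(x)|\,|\Phi_{T-T}(x)|$ with a smoothing set $T=\mathbb{Z}_\ell^d$, summed over all $(j,w)$; choosing $m=n^{d/(2rd+1)}$ and $\ell=n^{1-d/(2rd+1)}$ balances the two error terms and gives the stated bound. Your analysis of how trivial partitions constrain the coefficients is essentially the content of the paper's key claim, so that intuition is sound; it is the surrounding framework (contract rather than dilate, and introduce the auxiliary smoothing set $T$) that needs to be replaced.
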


Notice that Theorem~\ref{main2} follows from Theorem~\ref{multifold}. Indeed, when $k+2$ is divisible by $4$, we set $r=(k+2)/4$. If $V\subset [n]^d$ contains $k + 2$ points $\{v_1,\dots, v_{k + 2}\}$ that is a non-trivial solution to \eqref{ineqn} with $\textbf{x}_i = v_i$, then $\{v_1,\dots, v_{k + 2}\}$ must lie on a $k$-flat.  Hence, when $k + 2$ is divisible by $4$, we have\begin{equation*}
    a(d,k,n) \leq   O\left(n^{\frac{d}{(k + 2)/2}\left(1 - \frac{1}{(k + 2)d/2 + 1}\right)}\right).
\end{equation*} Since we have $a(d,k,n) < a(d,k - 1,n)$, this implies that for all $k\geq 2$, we have\begin{equation*}
    a(d,k,n) \leq O\left(n^{\frac{d}{2\lfloor(k + 2)/4\rfloor}\left(1  - \frac{1}{2\lfloor(k + 2)/4\rfloor d  + 1}\right)}\right).
\end{equation*}

In the proof of Theorem~\ref{multifold}, we need the following well-known lemma (see e.g. Lemma~2.1 in \cite{CT} and Theorem~4.1 in \cite{R}). For $U,T \subset \mathbb{Z}^d$ and $x \in \mathbb{Z}^d$, we define\begin{equation*}
    \Phi_{U - T}(x) = \{(u,t): u - t = x, u \in U, t \in T\}.
\end{equation*}
\begin{lemma}\label{CS}
For finite sets $U, T \subset \mathbb{Z}^d$, we have\begin{equation*}
    \frac{(|U||T|)^2}{|U+T|} \leq  \sum_{x \in \mathbb{Z}^d} |\Phi_{U-U}(x)|\cdot|\Phi_{T-T}(x)|.
\end{equation*}
\end{lemma}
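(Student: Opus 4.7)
The plan is to deduce Lemma~\ref{CS} by a standard Cauchy--Schwarz argument applied to the representation function of the sumset $U+T$. For each $y \in \mathbb{Z}^d$, let $r(y) := |\{(u,t) \in U \times T : u + t = y\}|$. Then $r$ is supported on $U+T$, and double counting ordered pairs gives $\sum_{y} r(y) = |U|\,|T|$.

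First I would apply Cauchy--Schwarz in the form
\[
(|U|\,|T|)^2 = \Bigl(\sum_{y \in U+T} r(y)\Bigr)^{\!2} \leq |U+T| \cdot \sum_{y \in \mathbb{Z}^d} r(y)^2,
\]
so it suffices to identify $\sum_y r(y)^2$ with the right-hand side of the lemma.

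Next I would interpret $\sum_y r(y)^2$ as an additive energy: it counts quadruples $(u_1, t_1, u_2, t_2) \in U \times T \times U \times T$ with $u_1 + t_1 = u_2 + t_2$. Rearranging this equation to $u_1 - u_2 = t_2 - t_1$ and grouping the quadruples by the common value $x \in \mathbb{Z}^d$ of this difference, each such $x$ contributes $|\Phi_{U-U}(x)| \cdot |\Phi_{T-T}(x)|$ quadruples (since the map $(t_1, t_2) \mapsto (t_2, t_1)$ is a bijection between pairs with $t_2 - t_1 = x$ and pairs with $t_2 - t_1 = x$, so the factor is $|\Phi_{T-T}(x)|$ as defined). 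Summing over $x$ gives $\sum_y r(y)^2 = \sum_x |\Phi_{U-U}(x)| \cdot |\Phi_{T-T}(x)|$, and combining with the Cauchy--Schwarz display finishes the proof after dividing by $|U+T|$.

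The argument is essentially a one-line exercise, so I do not expect any serious obstacle; the only point to keep an eye on is the sign convention when identifying the energy with $\Phi_{U-U}$ and $\Phi_{T-T}$, but because we sum over all $x \in \mathbb{Z}^d$ and $\Phi_{A-A}(x) = \Phi_{A-A}(-x)$ for any $A$ (via the involution swapping the two coordinates), the equality goes through without any change of variables.
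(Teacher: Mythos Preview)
Your argument is correct: this is precisely the standard Cauchy--Schwarz/additive-energy proof. The paper itself does not prove Lemma~\ref{CS} at all---it simply states it as a well-known lemma with references to Cilleruelo--Timmons and Ruzsa---so there is no comparison to make; your proof supplies exactly the routine justification that the paper omits. (One small typo: in your parenthetical you wrote that the swap is a bijection between pairs with $t_2 - t_1 = x$ and pairs with $t_2 - t_1 = x$; you meant between pairs with $t_2 - t_1 = x$ and pairs with $t_1 - t_2 = x$, but your concluding remark about $\Phi_{A-A}(x) = \Phi_{A-A}(-x)$ shows you have this right.)
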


\begin{proof}[Proof of Theorem~\ref{multifold}]

Let $d$, $r$, and $V$ be as given in the hypothesis. Let $m \geq 1$ be an integer that will be determined later.  We define \begin{equation*}
    S_r = \{v_1 + \dots +v_{r}: v_i \in V, v_i \neq v_j\},
\end{equation*} and a function \begin{equation*}
    \sigma:\binom{V}{r}\rightarrow S_r,\ \{v_1,\dots, v_r\} \mapsto v_1 + \dots + v_r.
\end{equation*} Notice that $\sigma$ is a bijection. Indeed, suppose on the contrary that \begin{equation*}
    v_1  + \dots + v_{r} = v'_1 + \dots + v'_{r}
\end{equation*} for two different $r$-tuples in $V$. Then by setting $(\textbf{x}_1,\dots,\textbf{x}_r)=(v_1,\dots,v_r)$, $(\textbf{x}_{r+1},\dots,\textbf{x}_{2r})=(v'_1,\dots,v'_r)$, $(\textbf{x}_{2r+1},\dots,\textbf{x}_{3r})=(\textbf{x}_{3r+1},\dots,\textbf{x}_{4r})$ arbitrarily, and $c_1=c_2=1$, we obtain a non-trivial solution to \eqref{ineqn}, which is a contradiction. In particular, we have $|S_r| = \binom{|V|}{r}$.

For $j \in [m]$ and $w \in \mathbb{Z}_j^d$, we let\begin{equation*}
    U_{j,w} = \{u \in \mathbb{Z}^d: ju + w \in S_r\}.
\end{equation*} Notice that for fixed $j \in [m]$, we have\begin{equation*}
    \sum_{w \in \mathbb{Z}_j^d} |U_{j,w}| = \sum_{w \in  \mathbb{Z}_j^d} |\{v \in S_r : v \equiv w \text{ mod $j$}\}|  = |S_r|.
\end{equation*} Applying Jensen's inequality to above, we have\begin{equation}\label{sqsum}
\sum_{w \in\mathbb{Z}_j^d} |U_{j,w}|^2 \geq |S_r|^2/j^d.
\end{equation}

For $i \geq 0$, we define\begin{equation*}
    \Phi^i_{U_{j,w}-U_{j,w}}(x)  = \{(u_1,u_2)\in \Phi_{U_{j,w}-U_{j,w}}(x): |\sigma^{-1}(ju_1+w)\cap\sigma^{-1}(ju_2+w)| = i\}.
\end{equation*} It's obvious that these sets form a partition of $\Phi_{U_{j,w}-U_{j,w}}(x)$. We also make the following claims.
\begin{claim}\label{up1}
For a fixed $x\in \mathbb{Z}^d$, we have
\begin{equation*}
\sum_{j \in [m]}\sum_{w \in\mathbb{Z}_j^d } |\Phi^0_{U_{j,w}-U_{j,w}}(x)| \leq 1,
\end{equation*}
\end{claim}
\begin{proof}
For the sake of contradiction, suppose the summation above is at least two, then we have $(u_1,u_2)\in \Phi^0_{U_{j,w}-U_{j,w}}(x)$ and $(u_3,u_4)\in \Phi^0_{U_{j',w'}-U_{j',w'}}(x)$ such that either $(u_1,u_2)\neq(u_3,u_4)$ or $(j,w)\neq (j',w')$.

Let $s_1,s_2,s_3,s_4 \in S_r$ such that $s_1 = ju_1 + w$, $s_2 = ju_2 + w$, $s_3 = j'u_3 + w'$, $s_4 = j'u_4 + w'$ and write $\sigma^{-1}(s_i)=\{v_{i,1},\dots,v_{i,r}\}$. Notice that $u_1 - u_2 = x = u_3 - u_4$. Putting these equations together gives us\begin{equation}\label{check}
    j'((v_{1,1} + \dots + v_{1,r}) - (v_{2,1} + \dots + v_{2,r})) = j((v_{3,1} + \dots + v_{3,r}) - (v_{4,1} + \dots + v_{4,r})).
\end{equation} It suffices to show that \eqref{check} can be seem as a non-trivial solution to \eqref{ineqn}. The proof now falls into the following cases.

\medskip
\noindent \emph{Case 1.}  Suppose $j \neq j'$. Without loss of generality we can assume $j'>j$. Notice that $(u_1,u_2)\in \Phi^0_{U_{j,w}-U_{j,w}}(x)$ implies\begin{equation*}
    \{v_{1,1},\dots,v_{1,r}\}\cap \{v_{2,1},\dots,v_{2,r}\}=\emptyset.
\end{equation*} Then after combining like terms in \eqref{check}, the coefficient of $v_1^1$ is at least $j'-j$, which means this is indeed a non-trivial solution to \eqref{ineqn}.

\medskip
\noindent \emph{Case 2.}  Suppose $j = j'$, then we must have $s_1 \neq s_3$. Indeed, if $s_1=s_3$, we must have $w=w'$ (as $s_1$ modulo $j$ equals $s_3$ modulo $j'$) and $s_2=s_4$ (as $j'(s_1-s_2)=j(s_3-s_4)$). This is a contradiction to either $(u_1,u_2)\neq(u_3,u_4)$ or $(j,w)\neq (j',w')$.

Given $s_1 \neq s_3$, we can assume, without loss of generality, $v_{1,1}\not\in \{v_{3,1},\dots,v_{3,r}\}$. Again, we have $\{v_{1,1},\dots,v_{1,r}\}\cap \{v_{2,1},\dots,v_{2,r}\}=\emptyset$. Hence, after combining like terms in \eqref{check}, the coefficient of $v_1^1$ is positive and we have a non-trivial solution to \eqref{ineqn}.\end{proof}

\begin{claim}\label{up2}
For a finite set $T \subset \mathbb{Z}^d$, and fixed integers $i,j\geq 1$, we have
\begin{equation*}
    \sum_{w\in \mathbb{Z}_j^d}\sum_{x\in \mathbb{Z}^d} |\Phi^{i}_{U_{j,w}-U_{j,w}}(x)|\cdot|\Phi_{T-T}(x)|\leq |V|^{2r-i}|T|.
\end{equation*}
\end{claim}
\begin{proof}
The summation on the left-hand side counts all (ordered) quadruples $(u_1,u_2,t_1,t_2)$ such that $(u_1,u_2)\in \Phi^{i}_{U_{j,w}-U_{j,w}}(t_1-t_2)$. For each such a quadruple, let $s_1,s_2 \in S_r$ such that\begin{equation*}
    s_1 = ju_1 + w \text{\quad and\quad} s_2 = ju_2 + w.
\end{equation*} There are at most $|V|^{2r-i}$ ways to choose a pair $(s_1,s_2)$ satisfying $|\sigma^{-1}(s_1)\cap \sigma^{-1}(s_2)|=i$. Such a pair $(s_1,s_2)$ determines $(u_1,u_2)$ uniquely. Moreover, $(s_1,s_2)$ also determines the quantity\begin{equation*}
    t_1-t_2=u_1-u_2=\frac{s_1-w}{j}-\frac{s_2-w}{j}=\frac{1}{j}(s_1-s_2).
\end{equation*} After such a pair $(s_1,s_2)$ is chosen, there are at most $|T|$ ways to choose $t_1$ and this will also determine $t_2$. So we conclude the claim by multiplication.\end{proof} 

Now, we set $T = \mathbb{Z}_\ell^d$ for some integer $\ell$ to be determined later. Notice that $U_{j,w} + T \subset \{0,1,\dots, \lfloor rn/j\rfloor + \ell-1\}^d$, which implies
\begin{equation}\label{ujwt}
    |U_{j,w} + T|\leq (rn/j+\ell)^d.
\end{equation}

By Lemma~\ref{CS}, we have \begin{equation*}
    \frac{|U_{j,w}|^2||T|^2}{|U_{j,w} + T|} \leq  \sum_{x \in \mathbb{Z}^d}|\Phi_{U_{j,w}-U_{j,w}}(x)|\cdot|\Phi_{T-T}(x)|.
\end{equation*} Summing over all $j \in [m]$ and $w \in \mathbb{Z}_j^d$, and using Claims~\ref{up1}~and~\ref{up2}, we can compute\begin{align*}
\sum_{j\in[m]}\sum_{w \in \mathbb{Z}_j^d}\frac{|U_{j,w}|^2||T|^2}{|U_{j,w} + T|}  & \leq   \sum_{j\in[m]}\sum_{w \in \mathbb{Z}_j^d} \sum_{x \in \mathbb{Z}^d}|\Phi_{U_{j,w}-U_{j,w}}(x)|\cdot|\Phi_{T-T}(x)| \\
&  =    \sum_{x \in \mathbb{Z}^d} \sum_{j\in[m]}\sum_{w\in \mathbb{Z}_j^d}\left( |\Phi^0_{U_{j,w}-U_{j,w}}(x)| + \sum_{i = 1}^{r}|\Phi^i_{U_{j,w}-U_{j,w}}(x)|\right)|\Phi_{T-T}(x)| \\
&  \leq    \sum_{x \in \mathbb{Z}^d}|\Phi_{T-T}(x)|  \sum_{j\in[m]}\sum_{w\in \mathbb{Z}_j^d}|\Phi^0_{U_{j,w}-U_{j,w}}(x)| +  \sum_{j\in[m]} \sum_{i = 1}^{r}|V|^{2r-i}\ell^d \\
&  \leq   \sum_{x \in \mathbb{Z}^d}\Phi_{T-T}(x)   +   \sum_{j\in[m]} \sum_{i = 1}^{r-1}|V|^{2r-i}\ell^d\\
& \leq \ell^{2d} + rm|V|^{2r-1}\ell^d,
\end{align*} On the other hand, using \eqref{sqsum} and \eqref{ujwt}, we can compute\begin{align*}
\sum_{j\in [m]}\sum_{w \in \mathbb{Z}_j^d}\frac{|U_{j,w}|^2||T|^2}{|U_{j,w} + T|} & \geq  \sum_{j\in [m]}\sum_{w \in \mathbb{Z}_j^d}\frac{|U_{j,w}|^2\ell^{2d}}{(rn/j + \ell)^d} \\
& \geq     \sum_{j\in [m]}  \frac{|S_r|^2\ell^{2d}}{j^d(rn/j + \ell)^d} \\ 
& =    \sum_{j\in [m]}  \frac{|S_r|^2 \ell^{2d}}{(rn + j\ell)^d}\\
& \geq   \frac{m|S_r|^2\ell^{2d}}{(rn + m\ell)^d},
\end{align*}

Combining the two inequalities above gives us \begin{align*}
    &\frac{m|S_r|^2\ell^{2d}}{(rn + m\ell)^d} \leq \ell^{2d} + rm|V|^{2r-1}\ell^d\\
    \implies& |S_r|^2  \leq \frac{(rn + m\ell)^d}{m} + r|V|^{2r-1}\frac{(rn + m\ell)^d}{\ell^d}.
\end{align*} By setting $m = n^{\frac{d}{2rd + 1}}$ and $\ell = n^{1 -\frac{d}{2rd + 1} }$, we get\begin{equation*}
    \binom{|V|}{r}^2 = |S_r|^2  \leq cn^{d - \frac{d}{2rd + 1}} + c|V|^{2r-1}n^{\frac{d^2}{2rd + 1}},
\end{equation*} for some constant $c$ depending only on $d$ and $r$. We can solve from this inequality that \begin{equation*}
    |V| = O\left(n^{\frac{d}{2r}\left(1 - \frac{1}{2rd + 1}\right)}\right),
\end{equation*} completing the proof.\end{proof}

\section{Concluding remarks}\label{sec_remarks}

1. It is easy to see that $\alpha_{d,s}(N)\geq \Omega(N^{1/d})$ for any fixed $d,s\geq 2$. Let $S$ be a set consisting of $N$ points in $\mathbb{R}^d$ with no $d+s$ members on a hyperplane. Suppose $V$ is a maximal subset of $S$ in general position, then $V$ generates at most $\binom{|V|}{d}$ hyperplanes and each of them covers at most $s$ points from $S\setminus V$. Hence we have the inequality\begin{equation*}
    s\binom{|V|}{d}+|V|\geq |S|=N,
\end{equation*}which justifies the claimed lower bound of $\alpha_{d,s}(N)$.

\begin{problem}
Are there fixed integers $d,s\geq 3$ such that $\alpha_{d,s}(N)\leq o(N^{1/2})$?
\end{problem}

\medskip

\noindent 2. We call a subset $V\subset [n]^d$ a \emph{$m$-fold $B_g$-set} if $V$ only contains trivial solutions to the equations \begin{equation*}
c_1\textbf{x}_1+c_2\textbf{x}_2+\dots+c_g\textbf{x}_g=c_1\textbf{x}'_1+c_2\textbf{x}'_2+\dots+c_g\textbf{x}'_g,
\end{equation*} with constant coefficients $c_i \in [m]$. We call $1$-fold $B_g$-sets simply \emph{$B_g$-sets}. By counting distinct sums, we have an upper bound $|V|\leq O(n^{d/g})$ for any $B_g$-set $V\subset [n]^d$.

Our Theorem~\ref{multifold} can be interpreted as the following phenomenon: by letting $m$ grow as some proper polynomial in $n$, we have an upper bound for $m$-fold $B_g$-sets, where $g$ is even, which gives a polynomial-saving improvement from the trivial $O(n^{d/g})$ bound. We believe this phenomenon should also hold without the parity condition on $g$.

\bibliographystyle{abbrv}
{\footnotesize\bibliography{main}}

\begin{thebibliography}{10}

\bibitem{BMS}
J.~Balogh, R.~Morris, and W.~Samotij.
\newblock Independent sets in hypergraphs.
\newblock {\em Journal of the American Mathematical Society}, 28(3):669--709,
  2015.

\bibitem{BMSs}
J.~Balogh, R.~Morris, and W.~Samotij.
\newblock The method of hypergraph containers.
\newblock In {\em Proceedings of the International Congress of Mathematicians:
  Rio de Janeiro 2018}, pages 3059--3092. World Scientific, 2018.

\bibitem{BS}
J.~Balogh and J.~Solymosi.
\newblock On the number of points in general position in the plane.
\newblock {\em Discrete Analysis}, 16:20pp, 2018.

\bibitem{BK}
P.~Bra{\ss} and C.~Knauer.
\newblock On counting point-hyperplane incidences.
\newblock {\em Computational Geometry}, 25(1-2):13--20, 2003.

\bibitem{CTW}
J.~Cardinal, C.~D. T{\'o}th, and D.~R. Wood.
\newblock General position subsets and independent hyperplanes in $d$-space.
\newblock {\em Journal of Geometry}, 108:33--43, 2017.

\bibitem{CT}
J.~Cilleruelo and C.~Timmons.
\newblock $k$-fold {Sidon} sets.
\newblock {\em Electronic Journal of Combinatorics}, 21(4):P4--12, 2014.

\bibitem{Du}
H.~E. Dudeney.
\newblock {\em Amusements in Mathematics}.
\newblock Nelson, London, 1917.

\bibitem{dvir2012subspace}
Z.~Dvir and S.~Lovett.
\newblock {Subspace evasive sets}.
\newblock In {\em Proceedings of the forty-fourth annual ACM Symposium on
  Theory of Computing}, pages 351--358, 2012.

\bibitem{erdos}
P.~Erd{\"o}s.
\newblock On some metric and combinatorial geometric problems.
\newblock {\em Discrete Mathematics}, 60:147--153, 1986.

\bibitem{Fl}
A.~Flammenkamp.
\newblock Progress in the no-three-in-line problem, ii.
\newblock {\em Journal of Combinatorial Theory, Series A}, 81(1):108--113,
  1998.

\bibitem{F}
Z.~F{\"u}red.
\newblock Maximal independent subsets in {Steiner} systems and in planar sets.
\newblock {\em SIAM Journal on Discrete Mathematics}, 4(2):196--199, 1991.

\bibitem{F1}
H.~Furstenberg and Y.~Katznelson.
\newblock A density version of the {Hales}--{Jewett} theorem for $k=3$.
\newblock {\em Discrete Mathematics}, 75(1-3):227--241, 1989.

\bibitem{F2}
H.~Furstenberg and Y.~Katznelson.
\newblock A density version of the {Hales}--{Jewett} theorem.
\newblock {\em Journal d’Analyse Math{\'e}matique}, 57(1):64--119, 1991.

\bibitem{Ha}
R.~R. Hall, T.~H. Jackson, A.~Sudbery, and K.~Wild.
\newblock Some advances in the no-three-in-line problem.
\newblock {\em Journal of Combinatorial Theory, Series A}, 18(3):336--341,
  1975.

\bibitem{J}
X.~D. Jia.
\newblock On finite {Sidon} sequences.
\newblock {\em Journal of Number Theory}, 44(1):84--92, 1993.

\bibitem{LV}
F.~Lazebnik and J.~Verstra{\"e}te.
\newblock On hypergraphs of girth five.
\newblock {\em Electronic Journal of Combinatorics}, 10(1):R25, 2003.

\bibitem{L08}
H.~Lefmann.
\newblock No $\ell$ grid-points in spaces of small dimension.
\newblock In {\em Algorithmic Aspects in Information and Management: 4th
  International Conference, AAIM 2008}, pages 259--270. Springer, 2008.

\bibitem{L}
H.~Lefmann.
\newblock Extensions of the no-three-in-line problem.
\newblock {\em Preprint}, 2012.
\newblock
  \url{www.tu-chemnitz.de/informatik/ThIS/downloads/publications/lefmann_no_three_submitted.pdf}.

\bibitem{M}
L.~Mili{\'c}evi{\'c}.
\newblock Sets in almost general position.
\newblock {\em Combinatorics, Probability and Computing}, 26(5):720--745, 2017.

\bibitem{morris2016534}
R.~Morris and D.~Saxton.
\newblock {The number of {$C_{2\ell}$}-free graphs}.
\newblock {\em Advances in Mathematics}, 298:534--580, 2016.

\bibitem{OB}
K.~O'Bryant.
\newblock A complete annotated bibliography of work related to sidon sequences.
\newblock {\em Electronic Journal of Combinatorics}, DS\#11:39pp, 2004.

\bibitem{PR}
K.~T. Phelps and V.~R{\"o}dl.
\newblock {Steiner} triple systems with minimum independence number.
\newblock {\em Ars Combinatoria}, 21:167--172, 1986.

\bibitem{PW}
A.~P{\'o}r and D.~R. Wood.
\newblock No-three-in-line-in-{3D}.
\newblock {\em Algorithmica}, 47(4):481--488, 2007.

\bibitem{Roth}
K.~F. Roth.
\newblock On a problem of {Heilbronn}.
\newblock {\em Journal of the London Mathematical Society}, 1(3):198--204,
  1951.

\bibitem{R}
I.~Z. Ruzsa.
\newblock Solving a linear equation in a set of integers {I}.
\newblock {\em Acta Arithmetica}, 65(3):259--282, 1993.

\bibitem{ST}
D.~Saxton and A.~Thomason.
\newblock Hypergraph containers.
\newblock {\em Inventiones mathematicae}, 201(3):925--992, 2015.

\bibitem{TS}
B.~Sudakov and I.~Tomon.
\newblock Evasive sets, covering by subspaces, and point-hyperplane incidences.
\newblock {\em Discrete \& Computational Geometry}, 72:1333--1347, 2024.

\end{thebibliography}
\end{document}